\numberwithin{equation}{section}
\newtheorem{theorem}{Theorem}[section]
\newtheorem{remark}[theorem]{Remark}
\newtheorem{cor}[theorem]{Corollary}
\newtheorem{pro}[theorem]{Proposition}
\newtheorem{lemma}[theorem]{Lemma}
\newtheorem{defn}[theorem]{Definition}
\newtheorem{ex}[theorem]{Example}
\def \H {{\mathcal H}}
\def \M {{\mathcal M}}
\def \W {{\mathcal W}}
\title{Invariant subspaces of powers of some unicellular operators}
\author[1]{Sneh Lata\thanks{Corresponding Author: sneh.lata@snu.edu.in}}
\author[1]{Sushant Pokhriyal\thanks{sp259@snu.edu.in}}
\author[2]{Dinesh Singh\thanks{dineshsingh1@gmail.com}}
\affil[1]{Department Of Mathematics\\
         Shiv Nadar University\\
         School of Natural Sciences\\
         Gautam Budh Nagar - 203207\\
         Uttar Pradesh, India}
\affil[2]{Centre For Lateral Innovation, Creativity and Knowledge\\
        SGT University\\
        Gurugram 122505\\
        Haryana, India}
\date{}
\begin{document}
\maketitle

\begin{abstract}
In this paper we study subspaces which are invariant under squares and cubes (separately as well as jointly) of unicellular backward weighted shift operators on a separable Hilbert space. The finite-dimensional subspaces are characterized for all weights and the infinite-dimensional subspaces are characterized for two classes of weights. 

\end{abstract}

\small\textbf{Mathematics Subject Classification (2020).} {Primary 47A15, Secondary 47B02, 47B37.}

\small\textbf{Keywords.}
{Invariant subspace, Unicellular operators, weighted shifts, Joint-invariant subspace.}

\section{Introduction}
The theory of operators on a Hilbert space has to a significant extent revolved around the problem of characterizing the invariant subspaces of specific examples of operators. The primary motives are two-fold for such an approach. In the first 
instance several interesting mathematical results follow from the study of the structure of invariant subspaces of specific 
operators. A classical illustration of this assertion is the fundamental work of Beurling \cite{beurling1949two} in describing the invariant 
subspaces of the unilateral shift on the Hardy space $H^2.$ Another instance of importance is the description of the invariant subspaces of a weighted shift operator on the Hilbert space of square summable sequences by Donoghue \cite{donoghue1957lattice} where the weight sequence was $\{\frac{1}{2^n}\}_{n=0}^\infty$. Donoghue's work gave rise to a series of deep and interesting generalizations by Nikolskii and others about which we shall elaborate a little in the coming paragraphs. 
The second motivation for the study of invariant subspaces is the fact that insights tend to emerge about an operator when it is restricted to an invariant subspace and the operator then assumes a simple form as can be seen in the case of the Wold decomposition of an isometry \cite[page 109]{hoffman2014banach} and in recent articles \cite{LATA2022126184, article}  by the authors of this paper.  

Throughout this article, $\mathbb{N}$ denotes the set of non-negative integers and $\H$ denotes a separable Hilbert space with orthonormal basis $\{e_{n}\}_{n\in \mathbb{N}}$. For a bounded sequence $\{w_{n}\}_{n\in \mathbb{N}}$ of positive real numbers, the \textit{forward weighted shift} with the weight sequence  
$\{w_n\}_{n\in \mathbb N}$ is the operator $T\in B(\H)$ defined by 
$T(e_{n}) = w_{n}e_{n+1}$ for all $n\in \mathbb N$ and its adjont, known as the \textit{backward weighted shift}, is given by  $T^*(e_0)=0$ and $T^*(e_n) = w_{n-1}e_{n-1}$ for all $n\ge 1.$  

In this article, we are interested in unicellular weighted shift operators. These are weighted shifts for which the lattice of invariant subspaces is a totally ordered set with respect to the set inclusion. In \cite{donoghue1957lattice}, Donoghue gave the first example of a unicellular weighted shift with $\H = \ell{^2}$ and $w_{n}=\frac{1}{2^n}$ where he showed that the only invariant subspaces for this weighted shift are of the form $\bigvee_{n\geq k}\{e_n\}$ for $k\in \mathbb{N} $. Identifying the key properties of these weights, Nikolskii extended this result to the entire class of square summable monotonically decreasing sequence of positive real numbers \cite{nikolskii1965invariant}. In fact, he proved much more, he showed that the lattice of invariant subspaces of $T$ stays the same even when you consider it as an operator on $\ell{^p}$ for $1\leq p\leq \infty$ and $\{w_n\}_{n\in \mathbb{N}}$ to be a monotonically decreasing sequence in $\ell{^q}$ consisting of positive numbers where $1/p+1/q=1.$ He continued this line of research in \cite{nikol1967invariant, nikol1968basicity} and gave few more necessary and sufficient conditions on weights for $T$ to be unicellular. In \cite{yakubovich1985invariant}, Yakubovich proved unicellularity of $T$ by obtaining the same lattice of invariant subspaces under much weaker conditions on the weights. We refer the interested readers to \cite{harrison1971unicellularity,herrero1990unicellular,kang1992unicellularity, kang1994study, joo1995unicellularity, radjavi2003invariant, shields1974weighted, yadav1982invariant,  yadav1982characterization, yousefi2001unicellularity} and the references therein.  

In this paper, we mainly focus on three problems: $(i)$ characterization of invariant subspaces of $T^{*2},$  $(ii)$ characterization of invariant subspaces of $T^{*3},$ and $(iii)$ characterization of joint-invariant subspaces of $T^{*2}$ and $T^{*3}.$ Characterizing invariant subspaces for an operator is equivalent to characterizing invariant subspaces for its adjoint; so our work is also a step in the above-mentioned direction of research.  

Our investigation deals with finite-dimensional and infinite-dimensional subspaces separately. For the finite-dimensional case, we work with the assumption that $T$ is a unicellular operator. In this case, we show that, with simple manipulations, all weights can assumed to be 1.  This allows us to obtain characterizations of finite-dimensional subspaces in above-mentioned all three problems under the assumption that $T$ is unicellular without imposing any extra conditions on the weights. On the contrary, the nature of weights play a crucial role in study of infinite-dimensional subspaces. We give characterizations of infinite-dimensional subspaces in all above-mentioned problems for two classes of weights. Here we do
not assume T to be unicellular; however, for both these classes
T is already known to be unicellular. The invariant subspaces of T for these
classes of weights have been characterized by Nikolskii in  \cite{nikolskii1965invariant} and Yadav \& Chaterjee in  \cite{yadav1982characterization} from where the unicellularity of $T$ for these classes of weights follows as a by-product. But we want to note that the case of finite-dimensional subspaces is far more daunting as compared to the infinite-dimensional subspaces case. 

The organization of the paper is as follows. In Section \ref{sec 2}, we give characterizations of finite-dimensional subspaces that are invariant under $T^{*2}$ (Theorem \ref{thm 2.5}),  $T^{*3}$ (Thereom \ref{ thm 2.8}), and joint-invarinat under $T^{*2}$ and $T^{*3}$ (Theorem \ref{thm 2.10}). In Section \ref{sec 3}, we consider T for two classes of weights. The invariant
subspaces for these classes of weights have been characterized by Nikolskii in  \cite{nikolskii1965invariant} and Yadav \& Chaterjee in  \cite{yadav1982characterization}. In Theorem \ref{thm 3.6}, we show that a condition on weights considered in (\cite{yadav1982characterization}, Theorem 1)  is redundant for characterization of invariant subspaces of $T$. In this same paper, authors claim that their result generalizes the main theorem of \cite{nikolskii1965invariant}, but our Examples \ref{ex 3.7} \& \ref{ex 3.8} show that the set of conditions on weights in \cite{nikolskii1965invariant} and \cite{yadav1982characterization} are independent of each other. We then give characterizations of infinite-dimensional subspaces that are invariant under $T^{*2}$ (Theorem \ref{thm 3.9}), invariant under $T^{*3}$ (Theorem \ref{thm 3.11}), and joint-invariant under $T^{*2}$ and $T^{*3}$ (Theorem \ref{thm 3.12}) corresponding to these two classes of weights. In the last section, we give some remarks and open problems related to $T$. We end this section with an interesting result on quasinilpotent unicellular operators. We give a sufficient condition (Corollary \ref{cor 4.4}) on analytic functions
$f$ so that $f(A)$ stays unicellular for a quasinilpotent unicellular operator $A$. This result not only generalizes the results of \cite{kang1994study} and \cite{joo1995unicellularity} for a much bigger class of unicellular operators but also gives a simpler proof of their results.

\section{Finite-dimensional subspaces }\label{sec 2}
Through out this section we will assume $T$ to be a unicellular operator. In this section we will characterize finite-dimensional subspaces of $\H$ that are invariant under $T^{*l}$ for $2\le l\le 3,$ and we will also give a characterization of finite-dimensional subspaces that are jointly invariant under $T^{*2}$ and 
$T^{*3}$. We start by recording a very useful and crucial observation that if a finite-dimensional subspace is invariant under $T^{*l}$ for some $l,$ then it must be contained in $M_k=\vee_{i=0}^{k}e_{i}$ for some $k\in \mathbb{N}.$ To justify this, we first note that 
in \cite{fang1992rosenthal} the author proved that a unicellular forward weighted shift on a separable Hilbert space is always quasinilpotent, that is, its spectrum equals $\{0\}.$ In particular, $\sigma(T^*)=\{0\}$ which yields that $\sigma(T^{*l})=\{0\}$ for every $l.$ 
Thus the resolvent set $\rho({T^{*l}})=\mathbb{C}\setminus \{0\}$ does not has any bounded component;  
therefore, the full spectrum of 
$T^{*l}$ also equals $\{0\}.$ Recall that for any bounded linear operator $A$ on a Hilbert space, the full spectrum of 
$A,$ denoted by $\eta(\sigma(A))$, is defined as the union of $\sigma(A)$ and the bounded components of $\rho(A).$  Now let $\M$ be a closed subspace of $\H$ that is invariant under $T^{*l}$ for some $l.$ Then, using Theorem 0.8 from \cite{radjavi2003invariant}, $\sigma(T^{*l}|\M) \subseteq  \eta(\sigma(T^{*l})) = \{0\}$ which gives that $\sigma(T^{*l}|\M)=\{0\}.$ Now, if we further assume $\M$ to be finite-dimensional, then 
$\sigma(T^{*l}|\M)=\{0\}$ which makes $T^{*l}|\M$ a nilpotent operator. As a result, there exists some $n$ such that $T^{*ln}(x)= 0$ for every $x\in \M$ which establishes that $\M\subseteq M_{nl-1.}$ 

Next we show that in case of finite-dimensional subspaces it is enough to focus on weighted shifts where all weights equal to 1. For a fixed $k\geq 1,$ define
\begin{align*}
    X &:M_{k}\rightarrow{M_{k}}\quad \text{by}\quad Xe_{n}= \delta_{n}e_{n} \ \text{with}~ \delta_{0}=1, ~\delta_{n}=w_{0}w_{1}\cdots w_{n-1} ~\text{for} ~1\leq n\leq k;\\
    \text{and}\quad T_{1}^{*} &:M_{k}\rightarrow{M_{k}}\quad \text{by}\quad T_{1}^{*}e_{0}=0, \ T_{1}^{*}e_{n}=e_{n-1} ~ \text{for} ~ 1\leq n\leq k.
\end{align*}

Then $(X^{-1}T_{1}^{*}X)e_{n} = (X^{-1}T_{1}^{*})\delta_{n}e_{n}
    = \delta_{n}X^{-1}e_{n-1}
    =\frac{\delta_{n}}{\delta_{n-1}}e_{n-1}
    = w_{n-1}e_{n-1}
    = T^{*}e_{n}$. Therefore, $X^{-1}T_{1}^{*}X=T^{*}$ which implies that $X^{-1}T_{1}^{*r}X=T^{*r}$ for any $r\geq 1$. So if $\M\subseteq M_{k}$ is a closed subspace, then $T_{1}^{*r}(\M)\subseteq \M$ if and only if $T^{*r}(X^{-1}\M)\subseteq X^{-1}\M$. This precisely means that for characterization of invariant subspaces of $T^{*r}$ for any $r\geq 1$, it is enough to characterize invariant subspaces of $T_{1}^{*r}$.
Hence, throughout this section we will work with the backward shifts with all the weights equal to $1$ and for convenience of notation we will denote $T_{1}^{*}$ by $T^{*}$.
 
\begin{flushleft}
 The following are two well-established results pertaining to nilpotent operators.
\end{flushleft}
\begin{lemma}\label{lem 2.1}
Let $V$ be a vector space and $S:V\rightarrow{V}$ be a linear operator. If for some $x\in V$ and some positive integer $m$ 
$$
S^{m-1}x\neq 0~~\text{but}~~S^{m}x=0,
$$
then the set $\{x,Sx,\ldots,S^{m-1}x\}$ is linearly independent.
\end{lemma}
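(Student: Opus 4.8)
The plan is to prove linear independence of $\{x, Sx, \ldots, S^{m-1}x\}$ by a standard argument that exploits the annihilating power of $S$. Suppose, for contradiction, that the set is linearly dependent. Then there exist scalars $c_0, c_1, \ldots, c_{m-1}$, not all zero, such that
\[
c_0 x + c_1 Sx + \cdots + c_{m-1}S^{m-1}x = 0.
\]
Let $j$ be the smallest index for which $c_j \neq 0$, so the relation reads $c_j S^j x + c_{j+1}S^{j+1}x + \cdots + c_{m-1}S^{m-1}x = 0$ with $c_j \neq 0$ and $0 \le j \le m-1$.

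The key step is to apply the operator $S^{m-1-j}$ to both sides of this relation. Since $S^m x = 0$, every term $S^{m-1-j}\,S^{i}x = S^{m-1-j+i}x$ with $i > j$ has exponent $m-1-j+i \ge m$, and hence vanishes. The only surviving term is the one coming from the lowest index $i = j$, namely $c_j S^{m-1-j}\,S^j x = c_j S^{m-1}x$. Thus the entire relation collapses to $c_j S^{m-1}x = 0$. Because $c_j \neq 0$, this forces $S^{m-1}x = 0$, contradicting the hypothesis that $S^{m-1}x \neq 0$. Therefore no nontrivial dependence relation can exist, and the set is linearly independent.

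The only subtlety worth monitoring is the bookkeeping on exponents: one must confirm that applying $S^{m-1-j}$ kills precisely the higher-order terms while isolating a single nonzero multiple of $S^{m-1}x$. This is purely a matter of checking that $m-1-j+i \ge m$ whenever $i \ge j+1$, which holds since $i \ge j+1$ gives $m-1-j+i \ge m$. I do not expect any genuine obstacle here; the argument is elementary once one has the idea of annihilating all but the leading term by a suitable power of $S$. An alternative but essentially equivalent route would be to induct on $m$, peeling off the top power $S^{m-1}x$, but the direct annihilation argument above is cleaner and avoids an inductive setup.
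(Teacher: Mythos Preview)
Your proof is correct; the annihilation argument isolating $c_j S^{m-1}x$ is the standard and cleanest way to handle this. The paper itself does not supply a proof, presenting the lemma as a ``well-established result pertaining to nilpotent operators,'' so there is nothing further to compare.
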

  
\begin{theorem}{\textbf{(Cyclic Nilpotent Theorem).}}\label{thm 2.2}
Every linear nilpotent operator on a finite-dimensional vector space splits into a direct sum of cyclic linear nilpotent operators.
\end{theorem}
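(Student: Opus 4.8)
The plan is to argue by induction on $n=\dim V$, taking the image $W=S(V)$ as the smaller invariant space to which the inductive hypothesis is applied. The base case $n\le 1$ is immediate, since a nilpotent operator on a space of dimension at most one must be the zero operator, which is trivially cyclic. For the inductive step, I would first note that because $S$ is nilpotent and $V\neq\{0\}$, the operator $S$ cannot be surjective; hence $W=S(V)$ is a proper $S$-invariant subspace and $S|_W$ is again nilpotent. Applying the inductive hypothesis to $S|_W$ yields a decomposition $W=C_1\oplus\cdots\oplus C_r$ into cyclic subspaces, where each $C_j$ is spanned by an orbit $\{y_j,Sy_j,\ldots,S^{d_j-1}y_j\}$ with $S^{d_j}y_j=0$.

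The heart of the construction is a lifting step. Since each generator $y_j$ lies in $W=\operatorname{im}S$, I would choose $x_j\in V$ with $Sx_j=y_j$ and form the longer orbit space $\tilde C_j=\operatorname{span}\{x_j,Sx_j,\ldots,S^{d_j}x_j\}$. Because $S^{d_j}x_j=S^{d_j-1}y_j\neq 0$ while $S^{d_j+1}x_j=S^{d_j}y_j=0$, Lemma \ref{lem 2.1} guarantees that $\tilde C_j$ is genuinely cyclic of dimension $d_j+1$. Next I would observe that the terminal vectors $S^{d_j}x_j=S^{d_j-1}y_j$ are precisely a basis of $\ker(S)\cap W$, whose dimension is therefore $r$; I would extend this family by vectors $z_1,\ldots,z_s$ to a basis of the full kernel $\ker S$, each $z_k$ spanning a one-dimensional cyclic subspace. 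The claim to establish is then
$$
V=\tilde C_1\oplus\cdots\oplus\tilde C_r\oplus\operatorname{span}\{z_1\}\oplus\cdots\oplus\operatorname{span}\{z_s\}.
$$

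The exhaustion of $V$ is a routine dimension count: the right-hand side has dimension $\sum_j(d_j+1)+s=\dim W+r+s=\dim W+\dim\ker S=\dim V$ by the rank–nullity theorem, using $s=\dim\ker S-r$. The step I expect to be the main obstacle is verifying that the displayed sum is genuinely \emph{direct}, i.e.\ that the union of the spanning orbits together with the $z_k$ is linearly independent. The cleanest route is to take a vanishing linear combination of all these vectors, apply $S$ to it, and exploit the fact that $S$ maps the $\tilde C_j$-orbits onto the already-independent $C_j$-orbits (killing the bottom vectors $S^{d_j}x_j$ and the $z_k$); the directness of $W=\bigoplus_j C_j$ then forces most coefficients to vanish, and the independence of the chosen basis of $\ker S$ disposes of the remaining bottom-level coefficients. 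Once directness is in hand, each summand is $S$-invariant with $S$ acting cyclically and nilpotently, which is exactly the asserted decomposition.
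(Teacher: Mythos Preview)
Your argument is correct and is essentially the standard induction-on-dimension proof of the Jordan decomposition for a nilpotent operator: descend to the image $W=S(V)$, decompose it by the inductive hypothesis, lift cyclic generators through $S$, and complete with a basis of $\ker S$ relative to $\ker S\cap W$. The key verifications --- that the terminal vectors $S^{d_j-1}y_j$ span exactly $\ker S\cap W$, the rank--nullity dimension count, and the directness argument by applying $S$ and then using the chosen basis of $\ker S$ --- all go through as you describe.

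There is, however, nothing to compare against: the paper does not supply a proof of Theorem~\ref{thm 2.2}. It is listed, together with Lemma~\ref{lem 2.1}, under the heading ``two well-established results pertaining to nilpotent operators'' and is simply quoted as background. So your proposal is not an alternative to the paper's proof but rather a filling-in of a result the authors take for granted.
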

The above theorem asserts that if $V$ is a finite-dimensional vector space and $S:V\rightarrow{V}$ is a linear nilpotent operator, then there exist $x_{1},\ldots,x_{k}\in V$ and $ m_{1},\ldots,m_{k}\in \mathbb{N}$ such that
$$
V=\bigoplus_{i=1}^{k} ~\langle x_{i}\rangle_{S},~\text{where}~\langle x_{i}\rangle_{S}=\text{span}~\{x_{i}, Sx_{i},\ldots, S^{m_{i}}x_{i}\}~\text{for some}~ m_{i}\quad \text{and} \quad\text{dim}~V=\sum_{i=1}^{k}(m_{i}+1).
$$

\begin{flushleft}
 Using Lemma \ref{lem 2.1} and Theorem \ref{thm 2.2}, we obtain the following result.
\end{flushleft}
\begin{cor}\label{cor 2.3}
Let $\M$ be a finite-dimensional subspace of $\H$ such that $T^{*n}\M\subseteq \M$ for some $n\in \mathbb{N}$. Then $\M$ splits into direct sum of at most $n$ number of $T^{*n}$- invariant cyclic subspaces, that is, there exist
$x_{1},\ldots,x_{k}\in \M, 1\leq k\leq n$ such that
  $$
\M=\bigoplus_{i=1}^{k} ~\langle x_{i}\rangle_{T^{*n}}.
  $$
\end{cor}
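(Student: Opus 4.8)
The plan is to apply the Cyclic Nilpotent Theorem (Theorem \ref{thm 2.2}) to the operator $T^{*n}$ restricted to $\M$, and then control the number of cyclic pieces using the special structure of the backward shift $T^*$. First I would verify that $T^{*n}|_{\M}$ is indeed a nilpotent operator on the finite-dimensional space $\M$. Since $\M\subseteq M_k$ for some $k$ (as established in the preceding discussion for subspaces invariant under $T^{*l}$), and since $T^{*}$ acts as a backward shift sending $e_j\mapsto e_{j-1}$ (with $T^*e_0=0$), repeated application of $T^{*n}$ eventually annihilates every vector in $M_k$; hence $T^{*n}|_{\M}$ is nilpotent. Theorem \ref{thm 2.2} then immediately gives a decomposition $\M=\bigoplus_{i=1}^{k}\langle x_i\rangle_{T^{*n}}$ into cyclic $T^{*n}$-invariant subspaces for some vectors $x_1,\dots,x_k$.

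The substantive part of the statement is the bound $k\le n$ on the number of cyclic summands, and this is where I would spend the effort. The key idea is to look at the images under $T^{*n}$ and count dimensions modulo the kernel. Specifically, for each cyclic generator $x_i$ with cyclic length $m_i$ (so $T^{*n m_i}x_i\neq 0$ but $T^{*n(m_i+1)}x_i=0$), Lemma \ref{lem 2.1}, applied to the operator $S=T^{*n}$ and the vector $x=x_i$, guarantees that $\{x_i, T^{*n}x_i,\dots, T^{*n m_i}x_i\}$ is linearly independent, justifying that these are genuine cyclic blocks. To bound $k$, I would consider the quotient $\M/(\M\cap \ker T^{*n})$, or equivalently count how many of the generators survive one application of $T^{*n}$. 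The natural strategy is to observe that $\dim(\ker T^{*n}|_{\M})\le n$, because $\ker T^{*n}$ in the whole space $\H$ is spanned by $e_0,\dots,e_{n-1}$ and so has dimension exactly $n$; intersecting with $\M$ can only lower this. In a nilpotent cyclic decomposition, the number $k$ of cyclic summands equals the dimension of the kernel of the nilpotent operator (each cyclic block contributes exactly one dimension to the kernel, namely its bottom vector $T^{*n m_i}x_i$). Therefore $k=\dim(\ker T^{*n}|_{\M})\le \dim\ker T^{*n}=n$.

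The main obstacle I anticipate is making the identification $k=\dim(\ker T^{*n}|_{\M})$ fully rigorous, since it relies on the standard fact that in a cyclic nilpotent decomposition each block contributes a one-dimensional piece to the kernel. This requires checking that the bottom vectors $T^{*n m_i}x_i$ of the distinct blocks are linearly independent and together span $\ker(T^{*n}|_{\M})$; the independence follows from the directness of the sum $\bigoplus_i\langle x_i\rangle_{T^{*n}}$ together with Lemma \ref{lem 2.1} applied within each block, and the spanning follows from a routine argument that any kernel element decomposes across the blocks and lands in the bottom of each. Once that identification is in place, the comparison $\dim\ker(T^{*n}|_{\M})\le\dim\ker T^{*n}=n$ is immediate from $\ker(T^{*n}|_{\M})\subseteq\ker T^{*n}=\operatorname{span}\{e_0,\dots,e_{n-1}\}$, and the bound $1\le k\le n$ follows (the lower bound $k\ge 1$ holding whenever $\M\neq\{0\}$).
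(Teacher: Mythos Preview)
Your proposal is correct and follows essentially the same approach as the paper. The paper's proof is just slightly more direct: rather than invoking the general identification $k=\dim\ker(T^{*n}|_{\M})$, it simply observes that the bottom vectors $T^{*nm_i}x_i$ are linearly independent (from the directness of the decomposition) and all lie in $M_{n-1}=\operatorname{span}\{e_0,\dots,e_{n-1}\}$ because $T^{*n(m_i+1)}x_i=0$; since $\dim M_{n-1}=n$, the bound $k\le n$ is immediate.
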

\begin{proof}
Since $T^{*n}$ is a nilpotent operator on $\M$, therefore $\M\subseteq M_{k}$ for some $k\in \mathbb{N}$. Then, according to Theorem \ref{thm 2.2}, there exist vectors $x_{1},\ldots,x_{k}\in \M$ such that
$$
\M=\bigoplus_{i=1}^{k} ~\langle x_{i}\rangle_{T^{*n}},~\text{where}~\langle x_{i}\rangle_{T^{*n}}=\text{span}~\{x_{i}, T^{*n}x_{i},\ldots, T^{*nm_{i}}x_{i}\}\quad\text{and}\quad\text{dim}~\M=\sum_{i=1}^{k}(m_{i}+1).
$$
Since $T^{*n(m_{i}+1)}x_{i}=0$ for all $i$, therefore $\{T^{*nm_{i}}x_{i}\}_{i}\subseteq M_{n-1}$. Also dim $M_{n-1}=n$ and the set $\{T^{*nm_{i}}x_{i}\}_{i}$ is linearly independent, this implies that $1\leq k\leq n$. This completes the proof.
\end{proof}

We first consider the case of finite-dimensional subspaces that are invariant under $T^{*2}.$ From the above discussion we know that if $\M$ is finite-dimensional subspace such that $T^{*2}\M\subseteq \M$, then $\M\subseteq M_{k}$ for some $k\in \mathbb{N}$. Suppose dim $\M=n,$ then we can easily verify that:

\begin{enumerate}[(i)]
\item $\M$ is non-cyclic if and only if $\M\subseteq M_{k}$ for some $n-1\leq k\leq 2n-3$.
    \item  $\M$ is cyclic if and only if $\M\subseteq M_{k}$ for some $ 2n-2\leq k\leq 2n-1$.
\end{enumerate}

\begin{lemma}\label{lem 2.4}
 Let $\M\subseteq \H$ be a finite-dimensional non-cyclic $T^{*2}$- invariant subspace. Then $e_{0}, e_{1}\in \M$.
\end{lemma}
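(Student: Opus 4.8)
The plan is to feed the subspace through the cyclic decomposition of Corollary \ref{cor 2.3} with $n=2$ and then exploit that, for weights all equal to $1$, the kernel of $T^{*2}$ is exactly the two-dimensional space $M_{1}$. Since $T^{*2}\M\subseteq\M$ and $\M$ is finite-dimensional, Corollary \ref{cor 2.3} gives a direct-sum decomposition
\[
\M=\bigoplus_{i=1}^{k}\langle x_{i}\rangle_{T^{*2}},\qquad 1\le k\le 2,
\]
where each generator satisfies $T^{*2m_{i}}x_{i}\neq 0$ but $T^{*2(m_{i}+1)}x_{i}=0$. The hypothesis that $\M$ is non-cyclic rules out $k=1$, for a single summand would exhibit $\M$ as $T^{*2}$-cyclic; hence $k=2$ and $\M=\langle x_{1}\rangle_{T^{*2}}\oplus\langle x_{2}\rangle_{T^{*2}}$.

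Next I would pin down the kernel. Because all weights are $1$, we have $T^{*2}e_{0}=T^{*2}e_{1}=0$ while $T^{*2}e_{n}=e_{n-2}\neq 0$ for $n\ge 2$, so $\ker T^{*2}=\operatorname{span}\{e_{0},e_{1}\}=M_{1}$, a two-dimensional space. The two terminal vectors $y_{i}:=T^{*2m_{i}}x_{i}$ obey $T^{*2}y_{i}=T^{*2(m_{i}+1)}x_{i}=0$, so $y_{1},y_{2}\in\ker T^{*2}=M_{1}$. Moreover $y_{1}$ and $y_{2}$ are linearly independent: each is a nonzero element of its own summand, and any nontrivial relation would place a nonzero vector in $\langle x_{1}\rangle_{T^{*2}}\cap\langle x_{2}\rangle_{T^{*2}}=\{0\}$.

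To finish, I would observe that two linearly independent vectors in the two-dimensional space $M_{1}$ necessarily span it; since $y_{1},y_{2}\in\M$ (as $\M$ is $T^{*2}$-invariant and contains each $x_{i}$), this forces $M_{1}=\operatorname{span}\{e_{0},e_{1}\}\subseteq\M$, that is, $e_{0},e_{1}\in\M$. The only step that deserves care is the equivalence ``non-cyclic $\Longleftrightarrow k=2$'': it rests on the standard fact that the number of cyclic summands of a nilpotent operator equals the dimension of its kernel, and it is precisely this that guarantees a \emph{second} terminal vector and thereby delivers the entire two-dimensional kernel inside $\M$. Everything else is bookkeeping with the decomposition, so I do not anticipate a genuine obstacle here.
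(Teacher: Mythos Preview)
Your proof is correct and follows essentially the same approach as the paper. Both arguments invoke Corollary \ref{cor 2.3} to obtain a decomposition $\M=\langle x_{1}\rangle_{T^{*2}}\oplus\langle x_{2}\rangle_{T^{*2}}$, observe that the two terminal vectors $T^{*2m_{i}}x_{i}$ land in $M_{1}=\ker T^{*2}$, and use their linear independence to conclude that they span $M_{1}$; the paper carries out the last step by explicit elimination of coefficients, whereas you phrase it as a dimension count, but this is only a cosmetic difference.
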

\begin{proof}
Since $\M$ is a finite-dimensional non-cyclic subspace of $\H$ invariant under $T^{*2}$, then using Corollary \ref{cor 2.3}, there exists $x_{1}, x_{2}\in \M$ such that
$$
 \M= \langle x_{1}\rangle_{T^{*2}} \oplus \langle x_{2}\rangle_{T^{*2}},
$$
where $\langle x_{i}\rangle_{T^{*2}}=\text{span}~\{x_{i}, T^{*2}x_{i},\ldots, T^{*2m_{i}}x_{i}\}$ for $i=1, 2$, and $\sum_{i=1}^{2}(m_{i}+1)=\text{dim}~\M$.

Since $T^{*2(m_{1}+1)}x_{1}=0$ and $T^{*2(m_{2}+1)}x_{2}=0$, therefore $\{T^{*2m_{1}}x_{1}, T^{*2m_{2}}x_{2}\} \subseteq M_{1}$. Now, let us consider the following
$$
T^{*2m_{1}}x_{1} = ae_{0}+be_{1}\quad\text{and}\quad
    T^{*2m_{2}}x_{2} = ce_{0}+de_{1},\quad\text{where}~a,b,c,d \in \mathbb{C}.
$$

Using the above equations we can deduce that $dT^{*2m_{1}}x_{1}-bT^{*2m_{2}}x_{2}= (ad-bc)e_{0}$. We know the set $\{T^{*2m_{1}}x_{1}, T^{*2m_{2}}x_{2}\}\subseteq \M$, therefore $e_{0}\in \M$. Similarly we can prove that $ e_{1}\in \M$, and we are done.
\end{proof}
\begin{theorem}\label{thm 2.5}
Let $\M$ be a non-cyclic $n$-dimensional subspace of $\H$ invariant under $T^{*2}$, then
$$
\boxed{ \M= \text{span}\{e_{0},\ldots, e_{n-p-2},T^{*2p}x,\ldots, x \}}~,
$$
where $x\in \M$ such that $\langle x,e_{n+p}\rangle \neq 0$ for some $-1\leq p\leq n-3$.
\end{theorem}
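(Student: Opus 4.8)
The plan is to bypass the two-chain decomposition of Corollary~\ref{cor 2.3} and instead read the structure of $\M$ directly off its set of \emph{leading indices}. For a nonzero $v\in\H$ let $\mathrm{lead}(v)$ denote the largest index $i$ with $\langle v,e_i\rangle\neq 0$, and put $L=\{\mathrm{lead}(v):v\in\M\setminus\{0\}\}$. A standard echelon/pivot argument gives $|L|=\dim\M=n$, and since every vector of $\M$ has leading index at most $\max L$, we have $\M\subseteq M_k$ with $k:=\max L$; I set $p:=k-n$. The driving observation is that $L$ is closed under subtracting $2$: if $\ell\in L$ with $\ell\ge 2$ and $\mathrm{lead}(v)=\ell$, then $\langle T^{*2}v,e_{\ell-2}\rangle=\langle v,e_\ell\rangle\neq 0$ while every higher coordinate of $T^{*2}v$ vanishes, so $\mathrm{lead}(T^{*2}v)=\ell-2$ and $T^{*2}v\in\M$ by invariance, whence $\ell-2\in L$.

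From this closure property $L$ decomposes into an even prefix and an odd prefix, $L=\{0,2,\ldots,2a\}\cup\{1,3,\ldots,2b+1\}$ with $a,b\ge 0$; here $a\ge 0$ and $b\ge 0$ because $e_0,e_1\in\M$ by Lemma~\ref{lem 2.4}, so $0,1\in L$. Counting gives $n=(a+1)+(b+1)$ and $k=\max(2a,2b+1)$. A short split into the cases $a\le b$ and $a>b$ then confirms the stated range $-1\le p\le n-3$ and, crucially, identifies the largest block of consecutive integers sitting inside $L$: the first missing integer is $\min(2a+2,\,2b+3)$, which in either case equals $n-p-1$, so that initial block is exactly $\{0,1,\ldots,n-p-2\}$.

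Next I would promote this initial block from leading indices to genuine basis vectors by induction on $j$. For $j=0$, $0\in L$ forces a vector whose only nonzero coordinate is $e_0$, so $e_0\in\M$; and if $e_0,\ldots,e_{j-1}\in\M$ and $j\in L$, then any $v\in\M$ with $\mathrm{lead}(v)=j$ can have its lower coordinates cleared to yield $e_j\in\M$. Running this for $0\le j\le n-p-2$ produces $M_{n-p-2}=\text{span}\{e_0,\ldots,e_{n-p-2}\}\subseteq\M$.

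Finally I assemble the claimed basis. Choose any $x\in\M$ with $\langle x,e_{n+p}\rangle\neq 0$, which exists since $n+p=k=\max L$; because $\M\subseteq M_k$ this $x$ automatically has leading index exactly $k$. The chain $x,T^{*2}x,\ldots,T^{*2p}x$ lies in $\M$ by invariance and its members have leading indices $n+p,n+p-2,\ldots,n-p$, all strictly above $n-p-2$ and pairwise distinct; together with $e_0,\ldots,e_{n-p-2}$ these are $n$ vectors with pairwise distinct leading indices, hence linearly independent, hence (as $\dim\M=n$) a basis, giving the boxed formula. I expect the main obstacle to be the bookkeeping of the second step, namely translating the even/odd prefix description of $L$ into the single parameter $p$ and checking that the solid block has length exactly $n-p-1$ across all parity cases; the other steps are routine. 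One could instead begin from $\M=\langle x_1\rangle\oplus\langle x_2\rangle$ and normalize the two chain-bottoms to $e_0,e_1$, but since the solid block is generally \emph{not} a single $T^{*2}$-cyclic subspace, the leading-index route looks cleaner.
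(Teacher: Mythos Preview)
Your argument is correct and takes a genuinely different route from the paper. The paper starts from the two-chain decomposition $\M=\langle x\rangle_{T^{*2}}\oplus\langle y\rangle_{T^{*2}}$ supplied by Corollary~\ref{cor 2.3}, and then runs an induction in which, at stage $m$, the pair $\{T^{*2(j-m)}x,\,T^{*2(n-j-2-m)}y\}\subseteq M_{2m+1}$ is shown to be independent modulo $M_{2m-1}$, extracting $e_{2m},e_{2m+1}$; a final parity split $k=2j$ versus $k=2j+1$ produces the boxed formula. Your leading-index set $L$ replaces this machinery entirely: the single observation that $L$ is closed under $\ell\mapsto\ell-2$ forces $L$ to be an even prefix together with an odd prefix, and the echelon promotion $\{0,\dots,n-p-2\}\subseteq L\Rightarrow e_0,\dots,e_{n-p-2}\in\M$ is a one-line Gaussian step rather than a two-chain induction. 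Your final count---$n-p-1$ basis vectors from the solid block plus $p+1$ chain elements with distinct leading indices---is cleaner than the paper's dimension bookkeeping across cases. As a bonus, your approach does not really need Lemma~\ref{lem 2.4}: if $L$ had only one parity class it would be $\{0,2,\dots,2(n-1)\}$ or $\{1,3,\dots,2n-1\}$, and in either case a single $T^{*2}$-chain from a top vector already spans $\M$, contradicting non-cyclicity. The only point worth making explicit is the degenerate case $p=-1$, where the chain $x,\dots,T^{*2p}x$ is empty and the solid block alone gives $\M=M_{n-1}$; your parameter analysis covers it, but the final basis paragraph tacitly assumes $p\ge 0$. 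The paper's approach, by contrast, keeps the cyclic decomposition in the foreground, which aligns with how the $T^{*3}$ case (Theorem~\ref{ thm 2.8}) is handled there; your pivot method would likely streamline that proof as well, with $L$ then closed under $\ell\mapsto\ell-3$ and decomposing into three residue-class prefixes.
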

\begin{proof}
Since $\M$ be a non-cyclic $n$-dimensional subspace of $\H$ invariant under $T^{*2}$, then $\M\subseteq M_{k}~(n-1\leq k\leq 2n-3)$. 

Let $k=2j+t$ for some $j\in \mathbb{N}~\text{and}~ t=0,1$. Therefore $\M\subseteq M_{2j+t}$ and there exist a $x\in \M$ such that $\langle x,e_{2j+t}\rangle \neq 0$. Consider
   \begin{equation}\label{eq 2.1}
       x= \sum_{i=0}^{2j+t}\alpha_{i}e_{i},\quad\alpha_{2j+t}\neq 0.
   \end{equation}
According to Lemma \ref{lem 2.1} the set $ \langle x\rangle_{T^{*2}}= \text{span}~\{x, T^{*2}x ,\ldots, T^{*2j}x\}$ is linearly independent and hence, dim $\langle x\rangle_{T^{*2}}=j+1$.

Since $\M$ is non-cyclic, then according to Corollary \ref{cor 2.3} there exist a subspace $ \W\subseteq \M$ invariant under $T^{*2}$ such that $\M=\langle x\rangle_{T^{*2}} \oplus \W$. 
Now, $\text{dim}~ \W= \text{dim}~ \M- \text{dim}~\langle x\rangle_{T^{*2}}=n-j-1$. This yields that $ \W\subseteq M_{2n-2j-3} $, otherwise if there exist an $v\in \W$ such that $\langle v,e_{i}\rangle \neq 0$ for some $i\geq 2n-2j-2$, then the set $\{y, T^{*2}y ,\ldots, T^{*2(n-j-1)}y\} \subseteq \W$ implying that dim $\W\geq n-j$. Suppose $\W=\langle y\rangle_{T^{*2}}$ for some $y\in M_{2n-2j-3}$, where
\begin{equation}\label{eq 2.2}
       y= \sum_{i=0}^{2n-2j-3}\beta_{i}e_{i},\quad\text{at least one the coefficients }\beta_{2n-2j-4},\beta_{2n-2j-3}\neq 0.
   \end{equation} 
   
 For $0\leq m\leq n-j-2$, the set $\{T^{*2(j-m)}x, T^{*2(n-j-2-m)}y\}\subseteq M_{2m+1}$. Now, we are going to apply induction on the set $\{ T^{*2(j-m)}x, T^{*2(n-j-2-m)}y\}$ for every $m$, to show that $\{e_{0}, e_{1},\ldots, e_{2n-2j-3}\}\subseteq \M$.
 
\vspace{.2cm}
 For $m=0$. The vectors $\{ T^{*2j}x, T^{*2(n-j-2)}y\}\subseteq M_{1}$ are linearly independent. Then according to Lemma \ref{lem 2.4} we have $e_{0}, e_{1}\in \M$.

\vspace{.2cm}
  For $m=l$. Let us assume that $\{e_{0}, e_{1},\ldots, e_{2l+1}\}\subseteq \M$.

\vspace{.2cm}
For $m=l+1$. We have $\{ T^{*2(j-l-1)}x, T^{*2(n-j-l-3)}y\}\subseteq M_{2l+3}$. Using Equations (\ref{eq 2.1}) and (\ref{eq 2.2}) we get
$$
T^{*2(j-l-1)}x = \sum_{i=0}^{2l+3}\alpha_{i}e_{i}\quad\text{and}\quad
            T^{*2(n-j-l-3)}y = \sum_{i=0}^{2l+3}\beta_{i}e_{i}.
$$

Let us consider the following
\begin{align*}
p_{1}&=T^{*2(j-l-1)}x-\sum_{i=0}^{2l+1}\alpha_{i}e_{i}=\alpha_{2l+2}e_{2l+2}+\alpha_{2l+3}e_{2l+3}\\
\text{and}\quad q_{1}&=T^{*2(n-j-l-3)}y - \sum_{i=0}^{2l+1}\beta_{i}e_{i}= \beta_{2l+2}e_{2l+2}+\beta_{2l+3}e_{2l+3}.
\end{align*}

For some $\alpha,\beta\in \mathbb{C}$, let us assume that $\alpha p_{1} +\beta q_{1}=0$ . Then applying $ T^{*2(l+1)}$ on both sides of the equation we get
     $$
     \alpha T^{*2j}x+\beta T^{*2(n-j-2)}y=0.
     $$
     
Since the set $\{T^{*2j}x,T^{*2(n-j-2)}y\}$ is linearly independent, hence $\alpha, \beta=0$. Thus $\{p_{1}, q_{1}\}\subseteq \M$ is linearly independent, therefore solving the above equations gives us that $e_{2l+2}, e_{2l+3}\in \M$. So by induction we have $\{e_{0}, e_{1},\ldots, e_{2n-2j-3}\}\subseteq \M$.\\

Now we will divide the rest of the proof in two cases as follows:\\

\noindent\underline{\textbf{Case $1$ ($t=0$).}} In this case $k=2j$. Using Equation (\ref{eq 2.1}) we get
$$
        T^{*2(2j-n+1)}x = \sum_{i=0}^{2n-2j-2}\alpha_{i}e_{i}\quad\text{for some}~\alpha_{i}\in \mathbb{C}.
$$

Then
$$
 T^{*2(2j-n+1)}x - \sum_{i=0}^{2n-2j-3}\alpha_{i}e_{i} = \alpha_{2n-2j-2}e_{2n-2j-2}\quad\text{which implies that}~  e_{2n-2j-2} \in \M.
$$

Since $\{e_{0}, e_{1},\ldots, e_{2n-2j-2}\}\subseteq \M$ and the set $\{x, T^{*2}x ,\ldots, T^{*2(2j-n)}x\}\subseteq \M$ is linearly independent, therefore $\M= \text{span}\{e_{0},\ldots, e_{2n-2j-2},T^{*2(2j-n)}x,\ldots, x \}$. Finally substituting the value of $n+p=2j$, we get
$$
 \boxed{\M= \text{span}\{e_{0},\ldots, e_{n-p-2},T^{*2p}x,\ldots, x \}} ~,
$$
where $\langle x,e_{n+p}\rangle \neq 0$ for some $-1\leq p\leq n-3$.\\

\noindent\underline{ \textbf{Case $2$ ($t=1$).}} In this case $k=2j+1$. Since the set $\{x, T^{*2}x ,\ldots, T^{*2(2j-n+1)}x\}\subseteq \M$ is linearly independent and $\{e_{0}, e_{1},\ldots, e_{2n-2j-3}\}\subseteq \M$, therefore $\M= \text{span}\{e_{0},\ldots, e_{2n-2j-3},T^{*2(2j-n+1)}x,\ldots, x \}$. Finally substituting the value of $n+p=2j+1$, we get
$$
\boxed{\M= \text{span}\{e_{0},\ldots, e_{n-p-2},T^{*2p}x,\ldots, x \}}~, 
$$
where $\langle x,e_{n+p}\rangle \neq 0$ for some $-1\leq p\leq n-3$.

This completes the proof.
\end{proof}
\begin{remark}\label{rem 2.6}
If we take $T^{*}$ to be the unicellular backward weighted shift where the weights $w_{n}$ may not all equal to 1 and take $\M$ to be a non-cyclic finite-dimensional subspace of $\H$ invariant under $T^{*2}$. Then $\M$ is contained in $M_{k}$ for some 
$k.$ Further, $X\M$ is a non-cyclic finite-dimensional subspace which is invariant under $T_{1}^{*2},$ where $T_{1}^{*}$ is the backward weighted shift on $M_{k}$ with all the weights equal to 1 and $ X:M_{k}\rightarrow{M_{k}}$ with $Xe_{n}= \delta_{n}e_{n} ~ \text{for} ~\delta_{n}=w_{0}w_{1}\cdots w_{n-1}, ~ 1\leq n\leq k~\text{and}~\delta_{0}=1$. Now, according to Theorem \ref{thm 2.5}, there exists a vector $x$ in the space $ X\M$ such that
$$
 X\M= \text{span}~\{e_{0},e_{1}\ldots, e_{n-p-2},~T_{1}^{*2p}x,T_{1}^{*2(p-1)}x\ldots, x \},
$$
where $\langle x,e_{n+p}\rangle \neq 0$ for some $-1\leq p\leq n-3$.
Then we can deduce that 
$$
\M=\text{span}~\{e_{0},e_{1}\ldots,e_{n-p-2},~X^{-1}T_{1}^{*2p}x,X^{-1}T_{1}^{*2(p-1)}x\ldots, X^{-1}x \}.
$$
Therefore  
$$
\boxed{\M=\text{span}~\{e_{0},e_{1}\ldots,e_{n-p-2},~T^{*2p}y,T^{*2(p-1)}y\ldots, y \}}~,
$$ 
where $y=X^{-1}x\in \M$ and $\langle y,e_{n+p}\rangle \neq 0.$ Hence, the general form of non-cyclic finite-dimensional invariant subspaces of $T^{*2}$ remains the same as 
given by Theorem \ref{thm 2.5} irrespective of whether the weights corresponding to $T$ are all equal to 1 or not.  
\end{remark}

\vspace{.3cm}
We now give the general form of a non-cyclic finite-dimensional subspace which is invariant under $T^{*3}.$ Suppose $\M$ is a finite-dimensional subspace of $\H$ that is invariant under $T^{*3}.$ Then, as explained earlier, $\M\subseteq M_{k}$ for some $k$. If dim $\M=n$, then we can easily verify that:

\begin{enumerate}[(i)]
\item $\M$ is non-cyclic if and only if $\M\subseteq M_{k}$ for some $n-1\leq k\leq 3n-4$.
    \item  $\M$ is cyclic if and only if $\M\subseteq M_{k}$ for some $3n-3\leq k\leq 3n-1$.
\end{enumerate}

\begin{theorem}\label{ thm 2.8}
Let $\M$ be a non-cyclic $n$-dimensional subspace of $\H$ invariant under $T^{*3}$. Suppose $p$ is the least integer such that $\M\subseteq M_{n+p}$ and $x\in \M$ with $\langle x,e_{n+p}\rangle \neq 0$. The following cases list all the possible forms that $\M$ can assume: 
\begin{enumerate}
     \item If there exists $y\in \M$ such that $\{T^{*3(\frac{n+p-t}{3})}x, T^{*3(\frac{2n-p-6+t}{3})}y\}$ is linearly independent, then
    \begin{align*}
        \M= \text{span}~\{x,T^{*3}x,\ldots,T^{*3(\frac{n+p-t}{3})}x,y,T^{*3}y\ldots, T^{*3(\frac{2n-p-6+t}{3})}y\},
    \end{align*}
    where $n+p=3j+t$ for some $j$ and $0\leq t\leq 2$.
    \item If there exists $y\in \M$ such that either $\{T^{*3(\frac{n+p-t}{3})}x, T^{*3(\frac{n+p-3-3r}{3})}y\}$ is linearly independent when $n+p=3j$ for some $j$ or  $\{T^{*3(\frac{n+p-t}{3})}x, T^{*3(\frac{n+p-2-3r}{3})}y\}$ is linearly independent when $n+p=3j+2$ for some $j$, then
     $$
      \M= \text{span}~\{e_{0}, e_{1},\ldots, e_{n-2p-3+3r},x, T^{*3}x ,\ldots, T^{*3(p-r)}x,~y, T^{*3}y ,\ldots, T^{*3(p-2r)}y\}
    $$
    for some $0\leq r\leq \frac{p+1}{2}$. Further, if $n+p=3j+1$ for some $j$ and $\{T^{*3(\frac{n+p-1}{3})}x, T^{*3(\frac{n+p-1}{3})}y\}$ is linearly independent, then $r$ must be zero, in which case
    $$
      \M= \text{span}~\{e_{0}, e_{1},\ldots, e_{n-2p-3+},x, T^{*3}x ,\ldots, T^{*3(p)}x, y, T^{*3}y ,\ldots, T^{*3(p)}y\}.
    $$
    \item If there exists $y\in \M$ such that $\{T^{*3(\frac{n+p-1}{3})}x, T^{*3(\frac{n+p-4-3r}{3})}y\}$ is linearly independent when $n+p=3j+1$ for some $j$, then
    $$
     \M= \text{span}~\{e_{0}, e_{1},\ldots, e_{n-2p-2+3r},x, T^{*3}x ,\ldots, T^{*3(p-r)}x, y, T^{*3}y ,\ldots, T^{*3(p-2r-1)}y\}
    $$
     for some $0\leq r\leq \frac{p}{2}$.
\end{enumerate}
\end{theorem}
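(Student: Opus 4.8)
The plan is to re-run the engine behind Theorem \ref{thm 2.5}, now bookkeeping residues modulo $3$ and permitting up to three cyclic summands. First I would apply Corollary \ref{cor 2.3}: since $\M$ is finite-dimensional and $T^{*3}$-invariant it splits as $\M=\bigoplus_{i=1}^{k}\langle x_{i}\rangle_{T^{*3}}$ with $1\le k\le 3$, and non-cyclicity forces $k\in\{2,3\}$. By the minimality of $p$ I would normalize the decomposition so that $x_{1}=x$ is the generator with $\langle x,e_{n+p}\rangle\neq 0$; writing $n+p=3j+t$ with $0\le t\le 2$, Lemma \ref{lem 2.1} gives $\dim\langle x\rangle_{T^{*3}}=j+1$ and places the bottom vector $T^{*3j}x$ in $M_{t}$. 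The bottoms $T^{*3m_{i}}x_{i}$ of all summands lie in $M_{2}$ and are linearly independent, which is exactly what caps $k$ at $3$ and what will later govern how many basis vectors $\M$ must contain; I would also record the elementary facts (i)--(ii) stated before the theorem to confine $n+p$ to the non-cyclic range $n-1\le n+p\le 3n-4$.

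The technical heart is a level-by-level induction generalizing the $p_{1},q_{1}$ step in the proof of Theorem \ref{thm 2.5}. At each level the relevant iterates of the active generators all lie in a common $M_{3m+s}$; subtracting the lower-order basis vectors already shown to belong to $\M$ leaves short vectors supported only on the top three coordinates $e_{3m+s-2},e_{3m+s-1},e_{3m+s}$. Applying an appropriate power of $T^{*3}$ and invoking the linear independence of the bottom vectors, exactly as in Lemma \ref{lem 2.4}, shows these short vectors are independent, so one may solve for the individual $e_{i}$. The number of basis vectors extracted at a given level equals the number of generators still active there, and which of $e_{3m},e_{3m+1},e_{3m+2}$ appear is dictated by the residue $t$; this residue tracking is what replaces the single uniform induction available for $T^{*2}$.

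I would then organize the conclusion by the value of $t=(n+p)\bmod 3$ and by how far down the second generator $y$ stays independent of $x$. When $k=2$ and at no level do the active iterates, together with the basis vectors already placed in $\M$, span all three coordinates of a top slice, no basis vector is forced and $\M=\langle x\rangle_{T^{*3}}\oplus\langle y\rangle_{T^{*3}}$ is precisely the spanning set of Case~1; here the dimension identity $\dim\langle x\rangle_{T^{*3}}+\dim\langle y\rangle_{T^{*3}}=(j+1)+\bigl(\tfrac{2n-p-6+t}{3}+1\bigr)=n$ closes the count. When the chains overlap, the induction deposits a full initial segment $\{e_{0},\ldots,e_{m}\}$ and leaves two truncated chains; the overlap depth is measured by $r$, and the residue $t$ then fixes $m$ (namely $n-2p-3+3r$ for $t\in\{0,2\}$ and $n-2p-2+3r$ for $t=1$) together with the surviving chain lengths $p-r+1$ and $p-2r+1$ (respectively $p-2r$), giving Cases~2 and~3. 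In each instance I would verify that the dimensions sum to $n$, that the stated independence hypothesis is exactly the condition producing that configuration, and that a possible third summand ($k=3$) is absorbed either into the initial block or into an increment of $r$; the degenerate $r=0$, $t=1$ possibility is the one singled out at the end of Case~2.

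The main obstacle is exhaustiveness. Unlike the $T^{*2}$ situation, where two independent bottoms always span the two-dimensional $M_{1}$ and hence force the entire initial segment, two chains here span only a plane inside the three-dimensional $M_{2}$, so whether an initial block appears, and how long it is, depends delicately on the residues of the generators' top coordinates and on the overlap parameter $r$. Proving that every admissible configuration of residues and overlaps collapses to exactly one of the three listed forms, and conversely that each listed form is genuinely $T^{*3}$-invariant of the claimed dimension, is where the real work lies; getting the index shifts in the induction correct for each $t$ (which of $e_{3m},e_{3m+1},e_{3m+2}$ is extracted) is the most error-prone point and the place I would concentrate the careful casework.
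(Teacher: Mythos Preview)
Your plan is correct and matches the paper's approach: cyclic decomposition via Corollary \ref{cor 2.3}, a case split by the residue $t=(n+p)\bmod 3$ together with an overlap parameter $r$, and a level-by-level induction (generalizing the $p_{1},q_{1}$ step of Theorem \ref{thm 2.5}) that extracts an initial block of $e_{i}$'s from the linear independence of the bottom vectors. The only cosmetic difference is that the paper carries an explicit third generator $z$ through the computation and shows its entire chain is absorbed into the initial block $X_{2}$, whereas you describe the same phenomenon as the third summand being ``absorbed into the initial block or into an increment of $r$''; this is bookkeeping, not a difference in strategy.
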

\begin{proof}
Since $\M$ is a non-cyclic $T^{*3}$-invariant subspace such that dim $\M=n$, then $\M\subseteq M_{n+p}~ (-1\leq p\leq 2n-4)$. 

Let $n+p=3j+t$ for some $j\in \mathbb{N}$ and $0\leq t\leq 2$. Suppose there exist a $x\in \M$ such that
   \begin{equation}\label{eq 2.3}
        x= \sum_{i=0}^{3j+t}\alpha_{i}e_{i},\quad\alpha_{3j+t}\neq 0.
   \end{equation}
   
 Then according to Lemma \ref{lem 2.1}, the set $\langle x\rangle_{T^{*3}}= \text{span}~\big\{x, T^{*3}x ,\ldots, T^{*3j}x\big\}$ is linearly independent and hence dim $\langle x\rangle_{T^{*3}}=j+1$.
   
 Since $\M$ is non-cyclic, then according to Theorem \ref{thm 2.2} there exist a subspace $\W$ of $\M$ invariant under $T^{*3}$ such that $\M=\langle x\rangle_{T^{*3}} \oplus \W$. Now, $\text{dim}~ \W= \text{dim}~ \M- \text{dim}~\langle x\rangle_{T^{*3}}=n-j-1$. This yields that $ \W\subseteq M_{3n-3j-4}$, otherwise, if there exist an $v\in \W$ such that $\langle v,e_{i}\rangle \neq 0$ for some $i\geq 3n-3j-3$, then the set $\{y, T^{*2}y ,\ldots, T^{*3(n-j-1)}y\} \subseteq \W$ implying that dim $\W\geq n-j$.

Note that, according to Corollary \ref{cor 2.3} the subspace $\M$ can be decomposed into at most three $T^{*3}$-invariant cyclic subspaces. Then either there exist a $y\in \W$ such that $T^{*3(n-j-2)}y\neq 0$ or $T^{*3(n-j-2)}y=0$ for all $y\in \W$.

Let us begin with the first possibility. Suppose there exist a $y\in \W$ such that $T^{*3(n-j-2)}y\neq 0$. In this situation, $ \M= \langle x\rangle_{T^{*3}} \oplus \langle y\rangle_{T^{*3}}$, where $ W=\langle y\rangle_{T^{*3}}= \text{span}~\{y, T^{*3}y ,\ldots, T^{*3(n-j-2)}y\}$. Then
 $$
  \M= \text{span}~\{x, T^{*3}x ,\ldots, T^{*3j}x,~y, T^{*3}y ,\ldots, T^{*3(n-j-2)}y\}.
 $$
  We can express the above form in another way
$$
 \boxed{ \M= \text{span}~\{x,T^{*3}x,\ldots,T^{*3(\frac{n+p-t}{3})}x,~y,T^{*3}y\ldots, T^{*3(\frac{2n-p-6-t}{3})}y \}}~,
$$
 where $x\in M_{n+p}$ such that $\langle x,e_{n+p}\rangle \neq 0$ and $n+p=3j+t$ for some $j\in \mathbb{N},0\leq t\leq 2$.\\
 
For the second possibility, assume that $T^{*3(n-j-2)}y=0$ for all $y\in \W$. Then $ \M\neq \langle x\rangle_{T^{*3}}  \oplus \W$. Going forward, to present the proof into a convenient way, we have divided it into three cases.\\

\noindent\underline{\textbf{Case $1$ ($n+p=3j$).}} Recall, from Equation (\ref{eq 2.3}) there exist an $x\in \M$ such that
   \begin{equation}\label{eq 2.4}
        x= \sum_{i=0}^{3j}\alpha_{i}e_{i},\quad\alpha_{3j}\neq 0\quad\text{and}\quad \text{dim}~\langle x\rangle_{T^{*3}}=j+1.
   \end{equation}
 
First of all if there exist $v\in \W$ such that $v= \sum_{i=0}^{3j}v_{i}e_{i},~v_{3j}\neq 0$, then the set $\{T^{*3j}x, T^{*3j}v\}\subseteq M_{0}$, which cannot be linearly independent, therefore $T^{*3j}v=0$ for all $v\in \W$. Suppose there exist a $y\in \W$ such that $T^{*3(j-1)}y\neq 0$. Then
  \begin{equation}\label{eq 2.5}
        y= \sum_{i=0}^{3j-1}\beta_{i}e_{i},\quad\text{where at least one of the coefficient}~\beta_{3j-1}, \beta_{3j-2}\neq 0.
  \end{equation}
  
In particular, $\langle y\rangle_{T^{*3}}= \text{span}~\{y, T^{*3}y ,\ldots, T^{*3(j-1)}y\}$ and dim $\langle y\rangle_{T^{*3}}=j$. Since $ \M\neq \langle x\rangle_{T^{*3}}  \oplus \W$, then according to Corollary (\ref{cor 2.3}) there exist a $z\in \W$ such that $\W= \langle y\rangle_{T^{*3}} \oplus \langle z\rangle_{T^{*3}}$. Therefore
$$
 \text{dim} \langle z\rangle_{T^{*3}}= \text{dim}~\W-\text{dim}~\langle y\rangle_{T^{*3}}=(n-j-1)-j=n-2j-1.
$$

So we have $\langle z\rangle_{T^{*3}}= \text{span}~\{z, T^{*3}z ,\ldots, T^{*3(n-2j-2)}z\}$ and this indicates that $T^{*3(n-2j-1)}z=0$, which guarantees $z\in M_{3n-6j-4} $. In particular
   \begin{equation}\label{eq 2.6}
        z= \sum_{i=0}^{3n-6j-4}\gamma_{i}e_{i},\quad\text{where at least one of the coefficients}~\gamma_{3n-6j-4}, \gamma_{3n-6j-5}\neq 0.
   \end{equation}
   
Equations (\ref{eq 2.4}), (\ref{eq 2.5}) and (\ref{eq 2.6}) implies that 
$$
\M=\text{span}~\{x,\ldots, T^{*3j}x,y,\ldots, T^{*3(j-1)}y,z,\ldots, T^{*3(n-2j-2)}z\}.
$$

Now let us assume that $T^{*3(j-1)}v=0$ for all $v\in \W$. Suppose there exist a $y\in \W$ such that $T^{*3(j-2)}y\neq 0$. Then using the similar arguments as above, we can deduce that there exist a $z\in \W$ such that
 \begin{align*}
 \langle y\rangle_{T^{*3}} &= \text{span}~\{y, T^{*3}y ,\ldots, T^{*3(j-2)}y\}~,~~\text{dim}~\langle y\rangle_{T^{*3}}=j-1,\\
 \langle z\rangle_{T^{*3}} &= \text{span}~\{z, T^{*3}z ,\ldots, T^{*3(n-2j-1)}z\}~,~~\text{and~ dim}~\langle z\rangle_{T^{*3}}=n-2j,
\end{align*}
where $y\in M_{3j-4}$ and $z\in M_{3n-6j-1}$. Then 
$$
\M=\text{span}~\{x,\ldots, T^{*3j}x,y,\ldots, T^{*3(j-2)}y,z,\ldots, T^{*3(n-2j-1)}z\}.
$$

So continuing this way, after $r$-th such steps let us assume $T^{*3(j-r)}v=0$ for all $v\in \W$. Suppose there exist a $y\in \W$ such that $T^{*3(j-1-r)}y\neq 0$. Then there exist a $z\in \W$ such that
    \begin{align}
       \langle y\rangle_{T^{*3}} &= \text{span}~\{y, T^{*3}y ,\ldots, T^{*3(j-1-r)}y\}~,~~\text{ dim}~\langle y\rangle_{T^{*3}}=j-r\label{eq 2.7},\\
       \langle z\rangle_{T^{*3}} &= \text{span}~\{z, T^{*3}z ,\ldots, T^{*3(n-2j-2+r)}z\}~,~~\text{and~ dim}~\langle z\rangle_{T^{*3}}=n-2j-1+r, \label{eq 2.8}
    \end{align}
where $y\in M_{3j-1-3r}$ and $z\in M_{3n-6j-4+3r}$. So after combining Equations (\ref{eq 2.4}), (\ref{eq 2.7}) and (\ref{eq 2.8}) at $r$-th step, we get
$$
\M = \text{span}~\{x, T^{*3}x ,\ldots, T^{*3j}x, y, T^{*3}y ,\ldots, T^{*3(j-1-r)}y, z, T^{*3}z ,\ldots, T^{*3(n-2j-2+r)}z\}.
$$

Since dim $\langle y\rangle_{T^{*3}}\geq$ dim $\langle z\rangle_{T^{*3}}$, therefore $j-r\geq n-2j-1+r$ implying that $0\leq r\leq (3j-n+1)/2$. Furthermore, let us write $\M=X_{1}\oplus X_{2}$, where
\begin{align*}
    X_{1} &=\text{span}~\{x, T^{*3}x ,\ldots, T^{*3(3j-n-r)}x, y, T^{*3}y ,\ldots, T^{*3(3j-n-2r)}y\}\\
\text{and}\quad X_{2} &=\text{span}~\{T^{*3(3j-n-r+1)}x ,\ldots,
T^{*3j}x, T^{*3(3j-n-2r+1)}y ,\ldots, T^{*3(j-1-r)}y, z,\ldots, T^{*3(n-2j-2+r)}z\}.
\end{align*}

Now we will show that $X_{2}=\{e_{0},\ldots, e_{3n-6j-3+3r}\}$. Firstly $T^{*3j}x\in M_{0}$, implying that $e_{0}\in \M$. For $0\leq m\leq n-2j-2+r$, the set $\{T^{*3(j-1-m)}x, T^{*3(j-1-r-m)}y,  T^{*3(n-2j-2+r-m)}z\}\subseteq M_{3m+3}$. Now let us apply induction on $m$.

\vspace{.2cm}
 For $m=0$, the set $\{T^{*3(j-1)}x, T^{*3(j-1-r)}y,  T^{*3(n-2j-2+r)}z\}\subseteq M_{3}$ such that
         \begin{align*}
       x_{1}&= T^{*3(j-1)}x -\alpha_{0}e_{0}= \sum_{i=1}^{3}\alpha_{i}e_{i},\\
           y_{1}&= T^{*3(j-1-r)}y -\beta_{0}e_{0}= \sum_{i=1}^{2}\beta_{i}e_{i},\\
       \text{and}\quad    z_{1}&= T^{*3(n-2j-2+r)}z -\gamma_{0}e_{0}= \sum_{i=1}^{2}\gamma_{i}e_{i}.
        \end{align*}
        
For $a,b,c\in \mathbb{C}$, consider that $a x_{1}+b y_{1}+cz_{1}=0$. Then $T^{*3}(a x_{1}+by_{1}+cz_{1})=0$. So $a\alpha_{3}e_{0}=0$ implying that $a=0$. Now,  
$bT^{*3(j-1-r)}y+cT^{*3(n-2j-2+r)}z=(b\beta_{0}+c\gamma_{0})e_{0}\in \text{span}~\{T^{*3(j)}x\}$. This is a contradiction to the fact that the set $\{T^{*3j}x, T^{*3(j-1-r)}y,  T^{*3(n-2j-2+r)}z\}$ is linearly independent. Therefore the set $\{x_{1},y_{1},z_{1}\}\subseteq M_{3}\setminus M_{0}$ is linearly independent and solving above equations gives us that $e_{1}, e_{2}, e_{3}\in \M$.

\vspace{.2cm}
 For $m=l$, let us assume that $\{e_{1}, e_{2},\ldots, e_{3l+3}\}\subseteq \M$.

\vspace{.2cm}
 For $m=l+1$, we have $\{T^{*3(j-l-2)}x, T^{*3(j-r-l-2)}y,  T^{*3(n-2j+r-l-3)}z\}\subseteq M_{3l+6}$. Then using Equations (\ref{eq 2.4}), (\ref{eq 2.7}) and (\ref{eq 2.8}) we get
\begin{align*}
    T^{*3(j-l-2)}x = \sum_{i=0}^{3l+6}\alpha_{i}e_{i},\quad
            T^{*3(j-r-l-2)}y = \sum_{i=0}^{3l+6}\beta_{i}e_{i}\quad\text{and}\quad
            T^{*3(n-2j+r-l-3)}z = \sum_{i=0}^{3l+6}\gamma_{i}e_{i}.
\end{align*}
Let us consider the following
        \begin{align*}
            x_{1}&=T^{*3(j-l-2)}x-\sum_{i=0}^{3l+3}\alpha_{i}e_{i}\in M_{3l+6}\setminus M_{3l+3}, \\
           y_{1}&=T^{*3(j-r-l-2)}y - \sum_{i=0}^{3l+3}\beta_{i}e_{i}\in M_{3l+6}\setminus M_{3l+3},\\
           \text{and}\quad z_{1}&= T^{*3(n-2j+r-l-3)}z-\sum_{i=0}^{3l+3}\gamma_{i}e_{i}\in M_{3l+6}\setminus M_{3l+3}.
        \end{align*}
Using the cases for $m=0,l$ and by similar arguments as above, we can deduce that the set $\{x_{1}, y_{1}, z_{1} \}$ is linearly independent in span $\{e_{3l+4}, e_{3l+5}, e_{3l+6}\}$. Then $e_{3l+4}, e_{3l+5}, e_{3l+6}\in \M$. Hence by induction the set $X_{2}=\{e_{0}, e_{1},\ldots, e_{3n-6j-3+3r}\}\subseteq \M$.

Moreover, the  set $X_{1}=\{x, T^{*3}x ,\ldots, T^{*3(3j-n-r)}x,~ y, T^{*3}y ,\ldots, T^{*3(3j-n-2r)}y\}\subseteq \M$ is linearly independent and dim $X_{1}=6j-2n-3r+2$, therefore
$$
   \M= \text{span}~\{e_{0}, e_{1},\ldots, e_{3n-6j-3+3r},~x, T^{*3}x ,\ldots, T^{*3(3j-n-r)}x,~ y, T^{*3}y ,\ldots, T^{*3(3j-n-2r)}y\}.
$$

Lastly, substituting the value of $n+p=3j$ in above form mentioned, we get
     \begin{align*}
       \boxed{ \M= \text{span}~\big\{e_{0}, e_{1},\ldots, e_{n-2p-3+3r},~x, T^{*3}x ,\ldots, T^{*3(p-r)}x,~ y, T^{*3}y ,\ldots, T^{*3(p-2r)}y\big\}}~.
    \end{align*}
 
 \vspace{.3cm}
\noindent\underline{\textbf{Case $2$ ($n+p=3j+1$).}} Recall, from Equation (\ref{eq 2.3}) there exist an $x\in \M$ such that
  \begin{equation}\label{eq 2.9}
       x= \sum_{i=0}^{3j+1}\alpha_{i}e_{i},~\alpha_{3j+1}\neq 0\quad\text{and\quad dim}~\langle x\rangle_{T^{*3}}=j+1.
  \end{equation}
 
We will further divide this case into two subcases depending on whether there exists an $v\in \W$ such that $T^{*3j}v\neq 0$ or not.

\noindent\underline{\textbf{Subcase $1$.}} Suppose there exist a $y\in \W$ such that $T^{*3j}y\neq 0$. Then
\begin{equation}\label{eq 2.10}
    y= \sum_{i=0}^{3j+1}\beta_{i}e_{i},\quad\text{where at least one of the coefficient's}~\beta_{3j}, \beta_{3j+1}\neq 0.
\end{equation}

In particular, $\langle y\rangle_{T^{*3}}= \text{span}~\{y, T^{*3}y ,\ldots, T^{*3j}y\}$ and dim $\langle y\rangle_{T^{*3}}=j+1$. Now proceeding in the same manner as Case $1$, we get that there exists a $z\in \W$ such that $\W=\langle y\rangle_{T^{*3}}\oplus \langle z\rangle_{T^{*3}}$. Thus
$$
\text{dim}~ \langle z\rangle_{T^{*3}}= \text{dim}~\W-\text{dim}~\langle y\rangle_{T^{*3}}=(n-j-1)-(j+1)=n-2j-2.
$$

So we have $\langle z\rangle_{T^{*3}}= \text{span}~\{z, T^{*3}z ,\ldots, T^{*3(n-2j-3)}z\}$. Then $T^{*3(n-2j-2)}z=0$, so $z\in M_{3n-6j-7} $. Therefore
\begin{equation} \label{eq 2.11}
    z= \sum_{i=0}^{3n-6j-7}\gamma_{i}e_{i}~\text{ where}~\gamma_{3n-6j-7}\neq 0.
\end{equation}

Then by Equations (\ref{eq 2.9}), (\ref{eq 2.10}) and (\ref{eq 2.11}) we get
$$
M= \text{span}~\{x, T^{*3}x ,\ldots, T^{*3j}x,~ y, T^{*3}y ,\ldots, T^{*3j}y, ~z, T^{*3}z ,\ldots, T^{*3(n-2j-3)}z\}.
$$

Furthermore, let us write $\M=X_{1}\oplus X_{2}$, where
\begin{align*}
    X_{1} &=\text{span}~\{x, T^{*3}x ,\ldots, T^{*3(3j-n+2)}x, y, T^{*3}y ,\ldots, T^{*3(3j-n+2)}y\}\\
   \text{and}\quad X_{2} &=\text{span}~\{T^{*3(3j-n+3)}x ,\ldots,
T^{*3j}x, T^{*3(3j-n+3)}y ,\ldots, T^{*3j}y, z,\ldots, T^{*3(n-2j-3)}z\}.
\end{align*}

For $0\leq m\leq n-2j-3$, the set $\{T^{*3(j-m)}x, T^{*3(j-m)}y,  T^{3(n-2j-3-m)}z\}\subseteq M_{3m+2}$. Applying induction on $\M$ and using the similar arguments as Case $1$, we can deduce that $ X_{2}=\{e_{0},\ldots, e_{3n-6j-7}\}\subseteq \M$.

Moreover
$$
 T^{*3(3j-n+2)}x =\sum_{i=0}^{3n-6j-5}\alpha_{i}e_{i}~~\text{and}~~T^{*3(3j-n+2)}y = \sum_{i=0}^{3n-6j-5}\beta_{i}e_{i}
$$
are linearly independent in the set $X_{1}$. Since $\{e_{0}, e_{1},\ldots, e_{3n-6j-7}\}\subseteq \M$, therefore using the similar arguments as in Case $1$ it is easy to deduce that $e_{3n-6j-6}, e_{3n-6j-5}\in \M$.

Also, the set $\{x, T^{*3}x ,\ldots, T^{*3(3j-n+1)}x,~ y, T^{*3}y ,\ldots, T^{*3(3j-n+1)}y\}\subseteq X_{1}$ is linearly independent and it has dimension equal to \say{$6j-2n+4$}. Then
$$
 \M= \text{span}~\{e_{0}, e_{1},\ldots, e_{3n-6j-5},~x, T^{*3}x ,\ldots, T^{*3(3j-n+1)}x,~ y, T^{*3}y ,\ldots, T^{*3(3j-n+1)}y\}.
$$

Lastly, substituting the value of $n+p=3j+1$ in the above mentioned form of $\M$, we get
$$
 \boxed{ \M= \text{span}~\{e_{0}, e_{1},\ldots, e_{n-2p-3},~x, T^{*3}x ,\ldots, T^{*3p}x,~ y, T^{*3}y ,\ldots, T^{*3p}y\}}~.
$$

\noindent\underline{\textbf{Subcase $2$.}} Let $T^{*3j}v=0$ for all $v\in \W$. Suppose there exist a $y\in \M$ such that
 \begin{equation}\label{eq 2.12}
      y= \sum_{i=0}^{3j-1}\beta_{i}e_{i},\quad\text{where at least one of the coefficient's}~\beta_{3j-3},\beta_{3j-2} \beta_{3j-1}\neq 0.
 \end{equation} 
 
 In particular, $\langle y\rangle_{T^{*3}}= \text{span}~\{y, T^{*3}y ,\ldots, T^{*3(j-1)}y\}$ and dim $\langle y\rangle_{T^{*3}}=j$. Proceeding as Case $1$, we get that there exists a $z\in \W$ such that $\W=\langle y\rangle_{T^{*3}}\oplus \langle z\rangle_{T^{*3}}$. Thus
$$
 \text{dim} ~\langle z\rangle_{T^{*3}}= \text{dim}~  \W-\text{dim}~\langle y\rangle_{T^{*3}}=(n-j-1)-j=n-2j-1.
$$

We have $\langle z\rangle_{T^{*3}}= \text{span}~\{z, T^{*3}z ,\ldots, T^{*3(n-2j-2)}z\}$. Then $T^{*3(n-2j-1)}z=0$, implying that $z\in M_{3n-6j-4} $. Therefore 
    \begin{equation}\label{eq 2.13}
        z= \sum_{i=0}^{3n-6j-4}\gamma_{i}e_{i},\quad\gamma_{3n-6j-4}\neq 0.
    \end{equation}
    
 So continuing this way, at the $r$-th step let us assume $T^{*3(j-r)}v=0$ for all $v\in \W$. Again using the similar arguments as Case $1$ we can prove there exist $y,z\in \W$ such that 
    \begin{align}
      \langle y\rangle_{T^{*3}} &= \text{span}~\{y, T^{*3}y ,\ldots, T^{*3(j-1-r)}y\}~,~~\text{ dim}~\langle y\rangle_{T^{*3}}=j-r,\label{eq 2.14}\\
       \langle z\rangle_{T^{*3}} &= \text{span}~\{z, T^{*3}z ,\ldots, T^{*3(n-2j-2+r)}z\}~,~~\text{and~ dim}~\langle z\rangle_{T^{*3}}=n-2j-1+r.\label{eq 2.15} 
    \end{align}
where $y\in M_{3j-1-3r}$ and $z\in M_{3n-6j-4+3r}$. Now combining Equations (\ref{eq 2.9}), (\ref{eq 2.14}) and (\ref{eq 2.15}) at the $r$-th step, we get
$$
 \M= \text{span}~\{x, T^{*3}x ,\ldots, T^{*3j}x,~ y, T^{*3}y ,\ldots, T^{*3(j-1-r)}y, ~z, T^{*3}z ,\ldots, T^{*3(n-2j-2+r)}z\}.
$$

Since dim $\langle y\rangle_{T^{*3}}\geq$ dim $\langle z\rangle_{T^{*3}}$, then $j-r\geq n-2j-1+r$ implying that $ 0\leq r\leq (3j-n+1)/2$. Furthermore, let us write $\M=X_{1}\oplus X_{2}$, where
\begin{align*}
     X_{1} &=\text{span}~\{x, T^{*3}x ,\ldots, T^{*3(3j-n+1-r)}x, y, T^{*3}y ,\ldots, T^{*3(3j-n-2r)}y\}\\
    \text{and}\quad X_{2} &=\text{span}~\{T^{*3(3j-n+2-r)}x ,\ldots,
T^{*3j}x, T^{*3(3j-n-2r+1)}y ,\ldots, T^{*3(j-1-r)}y, z,\ldots, T^{*3(n-2j-2+r)}z\}.
\end{align*}

Applying similar arguments as in Case $1$ we can prove that $X_{2}= \{e_{0},\ldots, e_{3n-6j-4+3r}\}\subseteq \M$. Also, we have dim $X_{1}=6j-2n+3-3r$. Then
$$
 \M= \text{span}~\{e_{0}, e_{1},\ldots, e_{3n-6j-4+3r},~x, T^{*3}x ,\ldots, T^{*3(3j-n+1-r)}x,~ y, T^{*3}y ,\ldots, T^{*3(3j-n-2r)}y\}.
$$

Lastly, substituting the value of $n+p=3j+1$ in the above mentioned form of $\M$, we get
$$
\boxed{ \M= \text{span}~\{e_{0}, e_{1},\ldots, e_{3n-6j-4+3r},~x, T^{*3}x ,\ldots, T^{*3(p-r)}x,~ y, T^{*3}y ,\ldots, T^{*3(p-1-2r)}y\}}~.
$$
 
 \vspace{.3cm}
\noindent\underline{\textbf{Case $3$ ($n+p=3j+2$).}} Recall, from Equation (\ref{eq 2.3}) there exist an $x\in \M$ such that
  \begin{equation}\label{eq 2.16}
       x= \sum_{i=0}^{3j+2}\alpha_{i}e_{i},\quad\alpha_{3j+2}\neq 0\quad\text{and\quad dim}~\langle x\rangle_{T^{*3}}=j+1.
  \end{equation}
    
Following the same path as in Case $1$, at the $r$-th step let us assume that $T^{*3(j+1-r)}v=0$ for all $v\in \W$. Again we can prove there exist $y,z\in \W$ such that 
    \begin{align}
      \langle y\rangle_{T^{*3}} &= \text{span}~\{y, T^{*3}y ,\ldots, T^{*3(j-r)}y\}~,~~\text{ dim}~\langle y\rangle_{T^{*3}}=j-r+1,\label{eq 2.17}\\
      \langle z\rangle_{T^{*3}} &= \text{span}~\{z, T^{*3}z ,\ldots, T^{*3(n-2j-3+r)}z\}~,~~\text{and~ dim}~\langle z\rangle_{T^{*3}}=n-2j-2+r,\label{eq 2.18} 
    \end{align}
    where $y\in M_{3j+2-3r}$ and $z\in M_{3n-6j-7+3r}$. Now combining Equations (\ref{eq 2.16}), (\ref{eq 2.17}) and (\ref{eq 2.18}) at the $r$-th step, we get
$$
  \M= \text{span}~\{x, T^{*3}x ,\ldots, T^{*3j}x,~ y, T^{*3}y ,\ldots, T^{*3(j-r)}y, ~z, T^{*3}z ,\ldots, T^{*3(n-2j-3+r)}z\}.
$$

Since dim $\langle y\rangle_{T^{*3}}\geq$ dim $\langle z\rangle_{T^{*3}}$, then $j+1-r\geq n-2j-2+r$ implying that $ 0\leq r\leq (3j-n+3)/2$. Repeating the process as in Case $1$, we can deduce that
$$
\boxed{M= \text{span}~\{e_{0}, e_{1},\ldots, e_{n-2p-3+3r},~x, ~T^{*3}x ,\ldots, T^{*3(p-r)}x,~ y, ~T^{*3}y ,\ldots, T^{*3(p-2r)}y\}}~.
$$
This completes the proof.
\end{proof}

\begin{remark}\label{rem 2.9}
By using the similar set of arguments as were used in Remark \ref{rem 2.6}, we deduce that the general forms that a non-cyclic finite-dimensional subspace invariant under $T^{*3}$ can assume remains the same as are given by Theorem \ref{ thm 2.8} irrespective of whether all weights corresponding to $T$ equals 1 or not.
\end{remark}

\begin{remark}
 If $\M$ is a finite-dimensional cyclic subspace of $\H$ that is invariant under $T^{*i}$ for $2\le i\le 3$, then all we 
can say is that $\M=\text{span}~\{x, T^{*i}x, T^{*2i}x, \dots, T^{*(n-1)i}x\}$ for some $n$ and $x\in \M.$ In this case, unlike the case of 
non-cyclic subspace, we can't guarantee the existence of any $e_i$ in $\M$. For example $\M=\text{span}~\{e_{0}+e_{1},e_{2}+e_{3},e_{4}+e_{5}\}$ is a cyclic 3-dimensional $T^{*2}$-invariant subspace and  $\M=\text{span}~\{e_{0}+e_{1},e_{3}+e_{4},e_{6}+e_{7}\}$ is a 
cyclic 3-dimensional $T^{*3}$-invariant subspace, but neither of them contains any $e_i.$
\end{remark}

\vspace{.3cm}
Using Theorem \ref{ thm 2.8}, we know that in order to construct a concrete example of a finite-dimensional non-cyclic 
$T^{*3}$- invariant subspace, one must start with two linearly independent vectors $x$ and $y$ so that the entire set $\langle x\rangle_{T^{*l}}\cup \langle y\rangle_{T^{*l}}$ is also linearly independent. In general, this can be a very tedious task to carry out. Our next result simplify this work by giving a necessary and sufficient condition for the set $\langle x\rangle_{T^{*l}}\cup \langle y\rangle_{T^{*l}}$ to be linearly independent.

\begin{pro}\label{pro 2.9}
  Let $x,y\in \H$ and $\langle x\rangle_{T^{*l}}=\{x, T^{*l}x ,\ldots, T^{*lr}x\}$ and $\langle y\rangle_{T^{*l}}=\{y, T^{*l}y ,\ldots, T^{*ls}y\}$ for some fixed $l,r,s\in \mathbb{N}$. Then $\langle x\rangle_{T^{*l}}\cup \langle y\rangle_{T^{*l}}$ is linearly independent if and only if the set \big\{$T^{*lr}x,T^{*ls}y\big\}$ is linearly independent.
\end{pro}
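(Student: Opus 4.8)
The plan is to split along the two implications and put essentially all the work into the reverse direction. The forward implication is immediate: the pair $\{T^{*lr}x,T^{*ls}y\}$ is a subset of $\langle x\rangle_{T^{*l}}\cup\langle y\rangle_{T^{*l}}$, and any subset of a linearly independent set is linearly independent. For the converse I would pass from the two sets to the two cyclic subspaces $P:=\text{span}\big(\langle x\rangle_{T^{*l}}\big)$ and $Q:=\text{span}\big(\langle y\rangle_{T^{*l}}\big)$ and aim to prove the single clean statement $P\cap Q=\{0\}$; everything then follows by a dimension count. Throughout I would use, as is implicit in the notation $\langle\,\cdot\,\rangle_{T^{*l}}$ (cf. Corollary \ref{cor 2.3}), that these orbits terminate, i.e. $T^{*l(r+1)}x=0$ and $T^{*l(s+1)}y=0$; this hypothesis is genuinely needed, since without it $x$ and $T^{*l}y$ could coincide, destroying independence of the union while leaving the two top vectors independent.

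First I would establish that each generating set is itself independent. Since $\{T^{*lr}x,T^{*ls}y\}$ is linearly independent, both $T^{*lr}x$ and $T^{*ls}y$ are nonzero; combined with the termination relations, Lemma \ref{lem 2.1} applied to $S=T^{*l}$ gives that $\langle x\rangle_{T^{*l}}$ and $\langle y\rangle_{T^{*l}}$ are each linearly independent, so $\dim P=r+1$ and $\dim Q=s+1$. Next I would observe that $P$ and $Q$ are $T^{*l}$-invariant and that, read off the ordered basis $x,T^{*l}x,\ldots,T^{*lr}x$, the restriction $T^{*l}|_P$ is a single nilpotent Jordan block; hence $\ker\big(T^{*l}|_P\big)=\text{span}\{T^{*lr}x\}$ is one-dimensional, and symmetrically $\ker\big(T^{*l}|_Q\big)=\text{span}\{T^{*ls}y\}$.

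The decisive step is then the intersection argument. Since $P$ and $Q$ are $T^{*l}$-invariant, so is $P\cap Q$, and $T^{*l}$ is nilpotent there. Were $P\cap Q$ nonzero, nilpotence would force the existence of a nonzero $w\in P\cap Q$ with $T^{*l}w=0$; such a $w$ would lie in $\ker\big(T^{*l}|_P\big)\cap\ker\big(T^{*l}|_Q\big)=\text{span}\{T^{*lr}x\}\cap\text{span}\{T^{*ls}y\}$, which is $\{0\}$ precisely because the two top vectors are independent --- a contradiction. Hence $P\cap Q=\{0\}$, so $\dim(P+Q)=(r+1)+(s+1)=r+s+2$, which equals the number of vectors in $\langle x\rangle_{T^{*l}}\cup\langle y\rangle_{T^{*l}}$; therefore this union is linearly independent. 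I expect the only real subtlety to be this last packaging of the nilpotent structure --- isolating the one-dimensional kernels and noting they must be met by any nonzero invariant intersection --- together with making explicit that the termination hypothesis (implicit in the cyclic-subspace notation) is what rules out the degenerate overlaps.
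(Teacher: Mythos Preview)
Your proof is correct and takes a genuinely different route from the paper's. The paper argues directly: assuming (say) $r\le s$, it takes a linear relation $\sum_{j=0}^{r}\gamma_{j}T^{*jl}x+\sum_{j=0}^{s}\delta_{j}T^{*jl}y=0$ and applies successive powers $T^{*ls},T^{*l(s-1)},\ldots$ to peel off the coefficients $\delta_{0},\ldots,\delta_{s-r-1}$ one at a time (using $T^{*l(r+1)}x=0$), then continues with $T^{*lr},T^{*l(r-1)},\ldots$ to kill the remaining pairs $(\gamma_{j},\delta_{j+s-r})$ via the hypothesis on $\{T^{*lr}x,T^{*ls}y\}$; this splits into the two cases $r=s$ and $r<s$. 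Your argument replaces this iterative computation with a single structural observation: on each cyclic piece $P,Q$ the restricted operator is a full Jordan block, so $\ker(T^{*l}|_{P})$ and $\ker(T^{*l}|_{Q})$ are the lines through $T^{*lr}x$ and $T^{*ls}y$, and nilpotence forces any nonzero $T^{*l}$-invariant intersection $P\cap Q$ to meet both kernels---impossible by independence of the top vectors. What the paper's approach buys is that it needs nothing beyond the definitions; what yours buys is a case-free argument that explains \emph{why} the top vectors control everything and that generalises immediately (the same kernel argument shows that several cyclic $T^{*l}$-invariant blocks form a direct sum iff their top vectors are independent). Your explicit flagging of the termination hypothesis $T^{*l(r+1)}x=0$, $T^{*l(s+1)}y=0$ is also well taken: the paper states it at the outset of its proof as part of unpacking the notation.
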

\begin{proof}
Let $x,y\in M_{k}$ for some $k\in \mathbb{N}$ such that $x=\sum_{j=0}^{n}\alpha_{j}e_{j}~\text{and}~y=\sum_{j=0}^{m}\beta_{j}e_{j}$ and $\alpha_{n},\beta_{m}\neq 0$. Also for some fixed $l,s,r\geq 0$ we have $T^{*lr}x\neq 0,T^{*ls}y\neq0$ and $T^{*l(r+1)}x=0,T^{*l(s+1)}y=0,~lr\leq n$ and $ls\leq m $. Let the set $\langle x\rangle_{T^{*l}}\cup \langle y\rangle_{T^{*l}}$ is linearly independent. Then automatically the set \{$T^{*lr}x,T^{*ls}y\}$ is also linearly independent.

Conversely, suppose $T^{*lr}x$ and $T^{*ls}y$ are linearly independent. First assume that $n\leq m$, then $r\leq s$. Now we can have the following cases;\\

\noindent\underline{\textbf{Case $1$ ($r=s$).}} Then the set \{$T^{*lr}x,T^{*lr}y\}$ is linearly independent. Now consider
    $$
     \sum_{j=0}^{r}\gamma_{j}T^{*jl}x +  \sum_{j=0}^{r}\delta_{j}T^{*jl}y =0.
    $$
    
Applying $T^{*lr}$ on both sides of the above equation we get $\gamma_{0}T^{*lr}x+ \delta_{0}T^{*lr}y=0$. Then $\gamma_{0}, \delta_{0}=0$ which implies that
$$
     \sum_{j=1}^{r}\gamma_{j}T^{*jl}x +  \sum_{j=1}^{r}\delta_{j}T^{*jl}y =0.
$$

Similarly, applying $T^{*l(r-1)}$ on both side of the above equation, we get $\gamma_{1}T^{*lr}x+ \delta_{1}T^{*lr}y=0$ which implies that $\gamma_{1}, \delta_{1}=0$. 

Continuing in the similar fashion and applying $T^{*l(r-i)}$ on both sides of the equation for $2\leq i\leq r$, we have that $\gamma_{0},\cdots,\gamma_{r}, \delta_{0},\cdots,\delta_{r}=0$. Hence the set $\langle x\rangle_{T^{*l}}\cup \langle y\rangle_{T^{*l}}$ is linearly independent.\\

\noindent\underline{\textbf{Case $2$ ($r<s$).}} Let us consider
$$
 \sum_{j=0}^{r}\gamma_{j}T^{*jl}x +  \sum_{j=0}^{s}\delta_{j}T^{*jl}y =0.
$$

Applying $T^{*ls}$ on both side of the above equation we get, $\delta_{0} T^{*ls}y= 0$, which gives $\delta_{0}=0$. Thus
$$
 \sum_{j=0}^{r}\gamma_{j}T^{*jl}x +  \sum_{j=1}^{s}\delta_{j}T^{*jl}y =0.
$$

If $r<s-1$, then applying $T^{*l(s-1)}$ on both side of the above equation we get, $\delta_{1} T^{*ls}y= 0$, then $\delta_{1}=0$. Continuing this way and applying $T^{*l(s-i)}$ on both sides of the equation for $2\leq i\leq s-r-1$, we have that $\delta_{0},\delta_{1},\cdots ,\delta_{s-r-1}=0$. Thus
$$
 \sum_{j=0}^{r}\gamma_{j}T^{*jl}x +  \sum_{j=s-r}^{s}\delta_{j}T^{*jl}y =0.
$$

Applying $T^{*lr}$ on both sides we get $\gamma_{0} T^{*lr}x + \delta_{s-r}T^{*ls}y= 0$, which gives $\gamma_{0},\delta_{s-r}=0$.

Again $T^{*l(r-1)}( \sum_{j=1}^{r}\gamma_{j}T^{*jl}x +  \sum_{j=s-r+1}^{s}\delta_{j}T^{*jl}y) =0$. Then $\gamma_{1} T^{*lr}x + \delta_{s-r+1}T^{*ls}y= 0$ implying that $\gamma_{1},\delta_{s-r+1}=0$.

Continuing in the same manner we can prove  $\gamma_{0},\cdots,\gamma_{r-1} ,\delta_{s-r},\cdots,\delta_{s-1}=0$ and eventually we get $\gamma_{r}T^{*lr}x + \delta_{s}T^{*ls}y= 0,~\text{which implies that}~ \gamma_{r},\delta_{s}=0$.
Hence the set $\langle x\rangle_{T^{*l}}\cup \langle y\rangle_{T^{*l}}$ is linearly independent. This completes the proof.
\end{proof}

Our final result of this section characterizes finite-dimensional subspaces of $\H$ that are jointly invariant under $T^{*2}$ and $T^{*3}.$ 

\begin{theorem}\label{thm 2.10}
Let $\M\subseteq \H$ be a closed subspace such that dim $\M=n$ and $\M$ is jointly invariant under $T^{*2}$ and $T^{*3}.$ Then $\M=\text{span}~\{e_{0},e_{1},\ldots, e_{n-2},\alpha e_{n-1}+\beta e_{n}\}$ for a non-zero pair $(\alpha,\beta)\in \mathbb{C}^{2}$. 
\end{theorem}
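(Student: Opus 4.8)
The plan is to leverage that joint invariance under $T^{*2}$ and $T^{*3}$ already forces invariance under $T^{*m}$ for \emph{every} $m\ge 2$. Indeed every integer $m\ge 2$ can be written as $m=2a+3b$ with $a,b\in\mathbb{N}$, so $T^{*m}=(T^{*2})^{a}(T^{*3})^{b}$ maps $\M$ into $\M$; the only power conspicuously missing is $T^{*1}=T^{*}$ itself, and this single gap is exactly what leaves room for the top coefficient to be split between $e_{n-1}$ and $e_{n}$. Since $\M$ is finite-dimensional and $T^{*2}|\M$ is nilpotent, the discussion at the start of this section gives $\M\subseteq M_{k}$ for some $k$; I take $k$ minimal, so there is $x=\sum_{i=0}^{k}\alpha_{i}e_{i}\in\M$ with $\alpha_{k}\neq 0$.

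The core step is a top-down peeling induction yielding $e_{0},e_{1},\ldots,e_{k-2}\in\M$ (assume $k\ge 2$; the cases $k\le 1$, where $\dim\M\le 2$, are immediate and give the degenerate instances of the claimed form). From $T^{*k}x=\alpha_{k}e_{0}\in\M$ we get $e_{0}\in\M$. Proceeding with $m=k-1,k-2,\ldots,2$, the vector $T^{*m}x=\sum_{j=0}^{k-m}\alpha_{j+m}e_{j}$ lies in $\M$; its lower terms involve only $e_{0},\ldots,e_{k-m-1}$, which already belong to $\M$, and its leading coefficient is $\alpha_{k}\neq 0$, so subtracting the lower terms forces $e_{k-m}\in\M$. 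As $m$ decreases from $k$ to $2$, this produces $e_{0},e_{1},\ldots,e_{k-2}\in\M$; subtracting these from $x$ then shows $v:=\alpha_{k-1}e_{k-1}+\alpha_{k}e_{k}\in\M$ with $\alpha_{k}\neq 0$.

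It remains to pin down $\M$ by dimension counting on the two-dimensional top slice $\text{span}\{e_{k-1},e_{k}\}$. At this stage $\M\supseteq\text{span}\{e_{0},\ldots,e_{k-2},v\}$, of dimension $k$, while $\M\subseteq M_{k}$ has dimension at most $k+1$, so $n=\dim\M\in\{k,k+1\}$. Reducing an arbitrary $w\in\M$ modulo $e_{0},\ldots,e_{k-2}$ leaves a vector of $\text{span}\{e_{k-1},e_{k}\}$, so the image of $\M$ in this slice either equals $\text{span}\{v\}$, whence $n=k$ and $\M=\text{span}\{e_{0},\ldots,e_{n-2},\alpha_{n-1}e_{n-1}+\alpha_{n}e_{n}\}$ with $\beta=\alpha_{n}\neq 0$, or equals all of $\text{span}\{e_{k-1},e_{k}\}$, forcing $\M=M_{k}=M_{n-1}$, i.e.\ the case $\beta=0$. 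In both cases $\M=\text{span}\{e_{0},\ldots,e_{n-2},\alpha e_{n-1}+\beta e_{n}\}$ for a nonzero pair $(\alpha,\beta)$.

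I expect the difficulty to be organizational rather than conceptual: making the peeling induction airtight by tracking precisely which powers $T^{*m}$ are available and verifying that the leading coefficient stays $\alpha_{k}\neq 0$ at each stage, and handling the boundary values $k\le 1$ so that the (possibly empty) list $e_{0},\ldots,e_{n-2}$ and the $\beta=0$ instance are correctly read off. A direct check that every space of the asserted form is genuinely $T^{*2}$- and $T^{*3}$-invariant confirms that the characterization is sharp.
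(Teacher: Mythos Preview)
Your proof is correct and follows essentially the same route as the paper: both arguments observe that joint invariance under $T^{*2}$ and $T^{*3}$ yields $T^{*m}\M\subseteq\M$ for all $m\ge 2$, then use the successive images $T^{*k}x, T^{*(k-1)}x,\ldots,T^{*2}x$ of a top vector $x$ to peel off $e_{0},\ldots,e_{k-2}$, and finish by a dimension count on the remaining top slice. The only organizational difference is that the paper first establishes the bound $n-1\le k\le n$ via a linear-independence argument on $\{x,T^{*2}x,\ldots,T^{*(n+1)}x\}$ and then peels, whereas you peel first and recover the same bound afterward from $\dim\M\in\{k,k+1\}$; this is a cosmetic reordering rather than a different idea.
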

\begin{proof}
First of all we will prove that $\M\subseteq M_{k}$ for $n-1\leq k\leq n$. Since dim $\M=n$ and dim $M_{n-2}=n-1$, then clearly $k\geq n-1$.
Suppose there exist a $x\in \M$ such that $\langle x,e_{n+1}\rangle \neq 0$ and $x=\sum_{i=0}^{n+1}x_{i}e_{i}$ for some $x_{i}\in \mathbb{C}$. As $\M$ is invariant under $T^{*2}$ and $T^{*3}$, therefore it is easy to see that $T^{*j}\M\subseteq \M$ 
for all $j\geq 2$. Now according to Lemma \ref{lem 2.1} the set $\W=\{x, T^{*2}x,T^{*3}x,\ldots,T^{*n+1}x\}\subseteq \M$ is linearly independent, which is a contradiction to the fact that dim $\W\leq\text{dim}~\M$. Therefore, $\M\subseteq M_{k}$ for $n-1\leq k\leq n$. 
 
Let $y=\sum_{i=0}^{n}y_{i}e_{i}$ be a element of $\M$ such that at least one of the coefficients $y_{n-1},y_{n}\neq 0$. If $n=1$, then clearly $\M=\text{span}~\{y_{0} e_{0}+y_{1} e_{1}\}$ for any non-zero pair of complex numbers $(y_{0},y_{1})\in \mathbb{C}^{2}$. Suppose $y_{n}\neq 0$, so if $n\geq 2$ then $T^{*n}y=w_{0}w_{1}\cdots w_{n-1}y_{n}e_{0}$, which means that $e_{0}\in \M$. Again $T^{*n-1}y=w_{0}w_{1}\cdots w_{n-2}y_{n-1}e_{0}+w_{1}w_{2}\cdots w_{n-1}y_{n}e_{1}$. Since $e_{0}\in \M$, therefore $e_{1}\in \M$. By continuing this procedure, it is easy to show that at each step $T^{*n-r}y\in \M$ implies that $e_{j}\in \M$ for $0\leq r\leq n-2$. Therefore
$$
\M=\text{span}~\{e_{0},e_{1},\ldots, e_{n-2},\alpha e_{n-1}+\beta e_{n}\}\quad\text{such that}\quad(\alpha,\beta)\neq (0,0).
$$
This completes the proof.
\end{proof}


\section{Infinite-dimensional subspaces}\label{sec 3}
In this section we will characterize infinite-dimensional subspaces that are invariant under $T^{*l}$ for $2\le l\le 3$ and infinite-dimensional subspaces that are jointly invariant under $T^{*2}$ and $T^{*3}$ for two classes of weights. The invariant subspaces for these weights were characterized by Nikolskii in \cite{nikolskii1965invariant} and Yadav \& Chaterjee in \cite{yadav1982characterization}. First we show that the condition of bounded variation on the weights considered by the authors in \cite{yadav1982characterization} is redundant. To state their result and establish our claim, we start with some definitions and remarks.

\begin{defn}
A set of vectors 
$\{x_{0},x_{1},\ldots,x_{n},\ldots\}\subseteq \H$ is called \textit{$\omega$-independent} if $\sum_{k=0}^{\infty}\alpha_{k}x_{k}=0$ for $\alpha_{k}\in \mathbb{C}$ implies that $\alpha_{k}=0$ for all $k\geq 0, k\in \mathbb{N}$.
\end{defn}

\begin{remark}\label{rem 3.2}
 Let $T$ be the forward weighted shift with weight sequence $\{w_{n}\}$ of bounded positive real numbers, then for any $x\in \H$ the set $\{x,Tx,T^{2}x,\ldots \}$ is $\omega$-independent.
\end{remark}

\begin{defn}
 Two sequence of vectors $\{f_{n}\}$ and $\{g_{n}\}$ in a normed vector space are said to be \textit{quadratically close} if $\sum_{n=0}^{\infty} \|f_{n}-g_{n}\|^{2}< \infty$.
\end{defn}
\begin{defn}
 A bounded sequence $\{w_{n}\}_{n \in \mathbb{N}}$ of positive real numbers is said to be of bounded variation if $\sum_{n=0}^{\infty}|w_{n}-w_{n+1}|<\infty$.
\end{defn} 
   
\begin{theorem}[\cite{yadav1982characterization}, Theorem 1]\label{thm 3.5}
Let $\{w_{n}\}_{n \in \mathbb{N}}$ be of bounded variation such that 
\begin{align}\label{eq  3.1}
    \delta= \underset{m\geq2,n}{\sup}~\sum_{k=0}^{\infty}\bigg(\frac{w_{k+m}\cdots w_{k+n}}{w_{m}\cdots w_{n}}\bigg)^{2}<\infty.
\end{align}
Then $\text{Lat}~T=\big\{\{0\},L_{1},L_{2},\ldots,L_{n},\cdots, \H\big\}$ where $ L_{n} = \bigvee_{i=n}^{\infty}\{e_{i}\}$.
\end{theorem}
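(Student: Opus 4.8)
The plan is to prove that every nonzero closed $T$-invariant subspace is one of the $L_n$; the inclusion $\mathrm{Lat}\,T\supseteq\{\{0\},L_1,L_2,\dots,\mathcal H\}$ is immediate, since $Te_i=w_ie_{i+1}\in L_n$ for every $i\ge n$ shows each $L_n$ is invariant, $L_0=\mathcal H$, and the $L_n$ form a decreasing chain with $\bigcap_n L_n=\{0\}$. For the reverse inclusion, let $M\neq\{0\}$ be closed with $TM\subseteq M$ and set $n=\min\{j:\langle v,e_j\rangle\neq0\text{ for some }v\in M\}$, its order. By definition $M\subseteq L_n$, and we may choose $x\in M$ with $\langle x,e_n\rangle=1$. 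Since $M$ is closed and invariant, $\overline{\operatorname{span}}\{T^mx:m\ge0\}\subseteq M\subseteq L_n$, so the entire theorem reduces to the single completeness statement $\overline{\operatorname{span}}\{T^mx:m\ge0\}=L_n$.

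To establish completeness I would compare the orbit of $x$ with the orthonormal basis $\{e_{n+m}\}_{m\ge0}$ of $L_n$. Writing $x=\sum_{j\ge n}a_je_j$ with $a_n=1$ and $c_m=w_nw_{n+1}\cdots w_{n+m-1}$, the normalized vectors $g_m:=c_m^{-1}T^mx\in L_n$ take the form $g_m=e_{n+m}+\sum_{s\ge1}a_{n+s}\big(\prod_{i=0}^{m-1}\frac{w_{n+s+i}}{w_{n+i}}\big)e_{n+s+m}$. The idea is then to invoke a classical perturbation theorem of Bari/Paley--Wiener type: a system that is $\omega$-independent and quadratically close to an orthonormal basis of a Hilbert space is itself a Riesz basis, in particular complete. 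The two notions defined immediately before the statement are exactly the hypotheses to be checked for $\{g_m\}$ against $\{e_{n+m}\}$, after which $\overline{\operatorname{span}}\{g_m\}=\overline{\operatorname{span}}\{e_{n+m}\}=L_n$ follows and the proof is done.

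Of the two hypotheses, $\omega$-independence is the easy one: the $g_m$ are nonzero scalar multiples of the $T^mx$, and $\{x,Tx,T^2x,\dots\}$ is $\omega$-independent by Remark \ref{rem 3.2}, so $\{g_m\}$ is $\omega$-independent as well. The quadratic closeness $\sum_{m\ge0}\|g_m-e_{n+m}\|^2<\infty$ is the main obstacle, and this is where both hypotheses of the theorem are used. From the displayed expression for $g_m$ one has $\|g_m-e_{n+m}\|^2=\sum_{s\ge1}|a_{n+s}|^2\big(\prod_{i=0}^{m-1}\frac{w_{n+s+i}}{w_{n+i}}\big)^2$, so summing over $m$ and interchanging the order of summation leads to $\sum_{s\ge1}|a_{n+s}|^2R_s$ with $R_s=\sum_{m\ge1}\big(\prod_{i=0}^{m-1}\frac{w_{n+s+i}}{w_{n+i}}\big)^2$. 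The subtlety is that condition~(3.1) controls the \emph{transposed} sums: with the window $[n,n+m-1]$ fixed, summing the squared products over the shift gives a bound $\le\delta$ for each fixed window length $m$, but this does not by itself control the sum over $m$ that defines $R_s$. The role of the bounded variation hypothesis is precisely to supply the missing decay in the window length, converting the shift-wise bound furnished by condition~(3.1) into summability in $m$; carrying out this conversion, so that each $R_s$ is finite and $\sum_{s\ge1}|a_{n+s}|^2R_s<\infty$ (using $\sum_{s\ge1}|a_{n+s}|^2=\|x\|^2-1<\infty$), is the technical heart of the argument. Once this estimate is in hand, the perturbation theorem applies and yields $M=L_n$, which completes the proof.
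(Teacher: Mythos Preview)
Your overall plan---reduce to a single cyclic vector $x$ of minimal order $n$, normalize $g_m=c_m^{-1}T^mx$, and apply Bari's theorem via $\omega$-independence plus quadratic closeness to $\{e_{n+m}\}$---is exactly the route the paper takes. (Note that the paper does not reprove Theorem~\ref{thm 3.5}; it quotes it from \cite{yadav1982characterization} and instead proves the stronger Theorem~\ref{thm 3.6} with the same conclusion under condition~(\ref{eq 3.1}) alone, so that is the proof to compare against.) The reduction to a cyclic vector and the $\omega$-independence step are fine.

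The gap is in the quadratic closeness step, and your diagnosis of it is off. After writing
\[
\|g_m-e_{n+m}\|^2=\sum_{s\ge1}|a_{n+s}|^2\prod_{i=0}^{m-1}\Big(\frac{w_{n+s+i}}{w_{n+i}}\Big)^{2},
\]
you interchange the sums over $m$ and $s$, arrive at $\sum_s|a_{n+s}|^2R_s$ with $R_s=\sum_{m\ge1}\prod_i(\cdot)^2$, and then assert that the bounded variation hypothesis is what is needed to control $R_s$. But one should not interchange at all. The paper bounds the sum over $s$ \emph{directly for each fixed $m$}: pull the two bottom factors $w_n,w_{n+1}$ out of the denominator, insert a compensating $w_{n+m}$ so the denominator runs over $[n+2,n+m]$, use $|a_{n+s}|\le\|x\|$ and $w_{n+s}\le\mu:=\sup_k w_k$ on the leftover numerator factor, and then apply condition~(\ref{eq 3.1}) to the remaining ratio (whose denominator now starts at an index $\ge 2$, as (\ref{eq 3.1}) requires). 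This yields the clean pointwise bound
\[
\|g_m-e_{n+m}\|^2\le C\,w_{n+m}^2,\qquad C=\frac{\delta\,\mu^2\|x\|^2}{|a_n|^2w_n^2w_{n+1}^2},
\]
and summing over $m$ gives quadratic closeness because $\{w_k\}\in\ell^2$ (which itself follows from (\ref{eq 3.1}) by taking a one-term window $m=n$). In particular the bounded variation hypothesis is never used---that is precisely the content of Theorem~\ref{thm 3.6}---so your claim that its ``role is precisely to supply the missing decay in the window length'' is mistaken: there is no missing decay once the estimate is organized in the right order.
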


In the following result we obtain the same characterization for invariant subspaces for $T$ as Theorem \ref{thm 3.5} but without assuming the condition of bounded variation on the weights.

 \begin{theorem}\label{thm 3.6}
Let $T :\H\rightarrow{\H}$ be a forward weighted shift with weights sequence $\{w_{n}\}_{n \in \mathbb{N}}$ satisfying Condition (\ref{eq  3.1}). Then $\text{Lat}~T=\big\{\{0\},L_{1},L_{2},\ldots,L_{n},\cdots, \H\big\}$.
 \end{theorem}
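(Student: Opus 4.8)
The plan is to run the classical quadratic-closeness argument (in the spirit of Nikolskii) while isolating the precise place where the hypothesis is used, so that bounded variation never enters. Let $\M$ be a nonzero closed $T$-invariant subspace with $\M\neq\H$; the goal is to show $\M=L_n$ for some $n$. First I would let $n$ be the least index for which some vector of $\M$ has a nonzero $e_n$-coordinate. By minimality every $y\in\M$ satisfies $\langle y,e_j\rangle=0$ for $j<n$, so $\M\subseteq L_n$, and it remains to prove the reverse inclusion. Fix $x=\sum_{i\geq n}\alpha_i e_i\in\M$ with $\alpha_n\neq 0$. Since $\M$ is $T$-invariant, each $T^m x\in\M$; normalizing by the leading coefficient I set $g_m=(\alpha_n w_n\cdots w_{n+m-1})^{-1}T^m x\in\M$, so that $g_m=e_{n+m}+h_m$ with $h_m\in L_{n+m+1}$. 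Thus $\{g_m\}$ sits inside $\M$ and is, coordinatewise, a lower-triangular perturbation of the basis tail $\{e_{n+m}\}$.

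The crux — and the only step that invokes the hypothesis — is the estimate $\sum_{m\geq 0}\|h_m\|^2<\infty$. Writing $i=n+d$ with $d\geq 1$, the coefficient of $e_{n+d+m}$ in $h_m$ carries the ratio $\frac{w_{n+d}\cdots w_{n+d+m-1}}{w_n\cdots w_{n+m-1}}$, which, since both numerator and denominator complete to the contiguous product $\prod_{j=n}^{n+d+m-1}w_j$, telescopes to $\frac{w_{m+n}\cdots w_{m+n+d-1}}{w_n\cdots w_{n+d-1}}$. This is exactly the summand of Condition~\eqref{eq  3.1} for the window $[n,n+d-1]$ with shift $m$. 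Summing first over $m$ (Fubini) I would bound, for each fixed $d$, the inner sum by $\delta$, and hence obtain $\sum_m\|h_m\|^2\leq|\alpha_n|^{-2}\delta\sum_{d\geq 1}|\alpha_{n+d}|^2\leq|\alpha_n|^{-2}\delta\,\|x\|^2<\infty$. I expect the main technical obstacle to be precisely this recasting of the telescoped ratio into \eqref{eq  3.1}-form together with the interchange of summations; I would also need to treat $n\in\{0,1\}$, where the window start dips below the threshold $2$ required by \eqref{eq  3.1}, by peeling off the one or two initial weights as a bounded constant factor (using only $\sup_n w_n<\infty$) and noting that the unit-length instance of \eqref{eq  3.1} already forces $\sum_m w_m^2<\infty$. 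Crucially, nowhere does this estimate need $\sum_n|w_n-w_{n+1}|<\infty$.

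Once quadratic closeness is in hand, the conclusion is soft operator theory. Define $K$ on $L_n$ by $K e_{n+m}=h_m$; since $\sum_m\|h_m\|^2<\infty$, the operator $K$ is Hilbert--Schmidt, hence compact, and $U=I+K$ is Fredholm of index $0$ with $Ue_{n+m}=g_m$. To see $U$ is injective, suppose $Uv=0$ for $v=\sum_m c_m e_{n+m}\in L_n$; then $\sum_m c_m g_m=0$, which is a convergent vanishing series in $\{T^m x\}$, so $\omega$-independence (Remark~\ref{rem 3.2}) forces every $c_m=0$ and $v=0$. An injective Fredholm operator of index $0$ is invertible, so $\overline{\operatorname{span}}\{g_m\}=U(L_n)=L_n$. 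As each $g_m\in\M$ and $\M$ is closed, this gives $L_n\subseteq\M\subseteq L_n$, whence $\M=L_n$. Together with the two trivial subspaces this yields $\operatorname{Lat}T=\{\{0\},L_1,L_2,\ldots,\H\}$, with bounded variation of $\{w_n\}$ never used.
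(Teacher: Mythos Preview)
Your argument is correct and follows the same quadratic-closeness strategy as the paper: normalize the orbit $\{T^{m}x\}$ so that it is quadratically close to the basis tail $\{e_{n+m}\}$, invoke $\omega$-independence (Remark~\ref{rem 3.2}), and conclude that the closed span is all of $L_{n}$. The only differences are packaging---the paper bounds each $\|g_{m}-e_{n+m}\|^{2}$ individually by $Cw_{n+m}^{2}$ and then sums using $\{w_{n}\}\in\ell^{2}$, whereas you interchange the double sum via your telescoping identity and apply Condition~(\ref{eq  3.1}) directly; and the paper cites Bari's theorem where you spell out the underlying Hilbert--Schmidt/Fredholm argument.
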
 
 \begin{proof}
Let $\M$ be a infinite-dimensional subspace of $\H$ invariant under $T$. Now, the Baire Category theorem guarantees the existence of at least an $x\in \M$ such that $x = \sum_{k=0}^{\infty} x_{k}e_{k}$ where infinitely many $x_{k}$'s are non-zero. At first, let us assume that $x_{0}\neq 0$. We will prove that $\M=\H$.\\

We have $T^{n}x = \sum_{k=0}^{\infty}x_{k}w_{k}\cdots w_{k+n-1}e_{k+n}$. Then
\begin{align*}
 \bigg\|\frac{T^{n}x}{x_{0}w_{0}\cdots w_{n-1}}-e_{n}\bigg\|^{2} &= \bigg\|\sum_{k=1}^{\infty}\frac{x_{k}w_{k}\cdots w_{k+n-1}}{x_{0}w_{0}\cdots w_{n-1}}e_{k+n}\bigg\|^{2}\\
    &= \sum_{k=1}^{\infty}\bigg(\frac{w_{k}\cdots w_{k+n-1}}{w_{0}\cdots w_{n-1}}\bigg)^{2}\bigg|\frac{x_{k}}{x_{0}}\bigg|^{2}\\
    &= \sum_{k=0}^{\infty}\bigg(\frac{w_{k+1}\cdots w_{k+n}}{w_{0}\cdots w_{n-1}}\bigg)^{2}\bigg|\frac{x_{k+1}}{x_{0}}\bigg|^{2}\\
    &= \frac{1}{w_{0}^{2}w_{1}^{2}}\sum_{k=0}^{\infty}\bigg(\frac{w_{k+1}\cdots w_{k+n}}{w_{2}\cdots w_{n-1}}\bigg)^{2}\bigg|\frac{x_{k+1}}{x_{0}}\bigg|^{2}\\
    &= \frac{w_{n}^{2}}{w_{0}^{2}w_{1}^{2}}\sum_{k=0}^{\infty}\bigg(\frac{w_{k+1}\cdots w_{k+n}}{w_{2}\cdots w_{n}}\bigg)^{2}\bigg|\frac{x_{k+1}}{x_{0}}\bigg|^{2}\\
    &\leq \frac{w_{n}^{2}\|x\|^{2}}{x_{0}^{2}w_{0}^{2}w_{1}^{2}}\sum_{k=0}^{\infty}\bigg(\frac{w_{k+2}\cdots w_{k+n}}{w_{2}\cdots w_{n}}\bigg)^{2}w_{k+1}^{2}\\
     &\leq \frac{\mu^{2} w_{n}^{2}\|x\|^{2}}{x_{0}^{2}w_{0}^{2}w_{1}^{2}}\sum_{k=0}^{\infty}\bigg(\frac{w_{k+2}\cdots w_{k+n}}{w_{2}\cdots w_{n}}\bigg)^{2}~~~(\mu=\text{sup}\{w_{n}\})\\
     &\leq \frac{\delta \mu^{2} w_{n}^{2}\|x\|^{2}}{x_{0}^{2}w_{0}^{2}w_{1}^{2}}
     = C w_{n}^{2},
\end{align*}
where $C=\frac{\delta \mu^{2} \|x\|^{2}}{x_{0}^{2}w_{0}^{2}w_{1}^{2}}$ is a constant. Since we know $\sum_{n=0}^{\infty}w_{n}^{2} < \infty$, therefore 
$$
\sum_{n=0}^{\infty} \bigg\|\frac{T^{n}x}{x_{0}w_{0}...w_{n-1}}-e_{n}\bigg\|^{2} \leq C\sum_{n=0}^{\infty}w_{n}^{2} < \infty.
$$

This implies that the orthonormal basis sequence $\{e_{n}\}_{n=0}^{\infty}$ and $\{T^{n}x/x_{0}w_{0}...w_{n-1}\}_{n=0}^{\infty}$ are quadratically close. From the Remark \ref{rem 3.2}, the set $\{T^{n}x/x_{0}w_{0}...w_{n-1}\}_{n=0}^{\infty}$ is $\omega-$independent. Now using Bari's Theorem (\cite{young1981introduction}, Thm $15$) for the pair of quadratically close $\omega-$independent sequences, we get that $\{T^{n}x/x_{0}w_{0}...w_{n-1}\}_{n=0}^{\infty}$ forms a Riesz basis of the Hilbert space $\H$. Moreover, the set $\bigvee_{n=0}^{\infty}\{T^{n}x\}\subseteq \M\subseteq \H$, and whence we conclude that $\M=\H$.

Again, if $x_{0}= 0$ and $k$ is the least natural number such that there exists an $x\in \M$ with $\langle x,e_{k}\rangle\neq 0$. So using Bari's theorem as above we can show that 
$$
\M= \bigvee_{n=0}^{\infty}\{T^{n}x\}= L_{k}.
$$
Thus every cyclic subspace of $T$ is of the form $ L_{k}$. Finally our theorem follows from the observation that span of any number of $L_{i}$'s is again an $L_{i}$.
 \end{proof}
 
 In the same paper \cite{yadav1982characterization}, the authors claimed that their result (Theorem \ref{thm 3.5}) is a generalization of Theorem 2 from \cite{nikolskii1965invariant}. But we give the following two examples to show that the two set of conditions imposed on the weight sequence are independent.
 For reader's reference, we recall that Theorem 2 from \cite{nikolskii1965invariant} gives the same characterization of invariant subspaces of $T$ as given by Theorem \ref{thm 3.5} under the assumption that the weight sequence is a square summable monotonically decreasing sequence of positive real numbers.   
 
 \begin{ex}\label{ex 3.7}
\normalfont{ Let $w_{n}=\frac{1}{n}$ for all $n\geq 1$. Then $\{w_{n}\}$ is bounded monotonically decreasing sequence of real numbers such that $\{w_n\}\in \ell^2.$ Let $T$ be the forward weighted shift on $\H$, then clearly the weights $\{w_{n}\}$ satisfy the hypothesis given in Theorem 2 from  \cite{nikolskii1965invariant}. 
 
 For each $n\in \mathbb{N}$, let us define
$$
a_{n}= \sum_{k=1}^{\infty}\bigg(\frac{w_{k+n}}{ w_{n}}\bigg)^{2}
        = \sum_{k=1}^{\infty}\bigg(\frac{n}{ n+k}\bigg)^{2}.
$$

Now for each $n\in \mathbb{N}$, let us consider the following function
$$
 f_{n}:[1,\infty)\rightarrow{\mathbb{R}},~\text{such that}~f_{n}(x)=\Big(\frac{n}{n+x}\Big)^{2}.
$$

Then clearly for each $n\in \mathbb{N}$, the function $f_{n}$ is positive valued and monotonically decreasing on $[1,\infty)$ such that $f_{n}(k)=\Big(\frac{n}{n+k}\Big)^{2}$. Then by integral test for convergence of series we get
$$
  a_{n}=  \sum_{k=1}^{\infty}\bigg(\frac{n}{ n+k}\bigg)^{2}\geq \int\limits_{1}^{\infty}f_{n}(x)dx=\frac{n^{2}}{n+1}.
$$
This implies that $a_{n}\rightarrow{\infty}$ as $n\rightarrow{\infty}$. If $\delta= \underset{m\geq2,n}{\sup}~\sum_{k=0}^{\infty}\bigg(\frac{w_{k+m}\cdots w_{k+n}}{w_{m}\cdots w_{n}}\bigg)^{2}$, then for $m=n, ~\delta= \sum_{k=1}^{\infty}(\frac{w_{k+n}}{ w_{n}})^{2}$ is not finite, and hence $\{w_n\}$ does not satisfy Condition \ref{eq 3.1} of Theorem \ref{thm 3.5}.}
 \end{ex} 
 
\begin{ex}\label{ex 3.8}
\normalfont{
 Let us define a sequence $\{w_{n}\}$ of positive real numbers as follows;
\begin{equation*}
    w_{n}= 
\begin{cases}
    \frac{1}{2^{n+1}},& \text{if} ~n~ \text{is even}\\
   \frac{1}{2^{n-1}},& \text{if} ~n~ \text{is odd}
\end{cases}
\end{equation*}

First of all, we have 
\begin{align}\label{eq  3.2}
     \frac{w_{k+r}}{w_{r}}= 
\begin{cases}
    \frac{1}{2^{k}},& \text{if} ~k,r~ \text{are even}\\
   \frac{1}{2^{k+2}},& \text{if} ~k,r~ \text{are odd}\\
   \frac{1}{2^{k}},& \text{if} ~k~\text{is even,}~r~ \text{is odd}\\
    \frac{1}{2^{k-2}},& \text{if} ~k~\text{is odd,}~r~ \text{is even}
   \end{cases}
\end{align}

Now for any pair of non-negative integers $(m,n)$, we have
\begin{align*}
    \sum_{k=0}^{\infty}\bigg(\frac{w_{k+m}\cdots w_{k+n}}{w_{m}\cdots w_{n}}\bigg)^{2} &= 
     1 + \bigg(\frac{w_{1+m}\cdots w_{1+n}}{w_{m}\cdots w_{n}}\bigg)^{2}+ \sum_{k=2}^{\infty}\bigg(\frac{w_{k+m}\cdots w_{k+n}}{w_{m}\cdots w_{n}}\bigg)^{2}\\
     &\leq 17 + \sum_{k=2}^{\infty}\bigg(\frac{w_{k+n}}{w_{n}}\bigg)^{2},~\text{using ~(\ref{eq  3.2})}\\
     & < 17+ \frac{15}{16}= \frac{272}{16}.
\end{align*}

Since this is true for any pair of non-negative integers $(m,n)$, this implies that $ \delta<\infty$. Clearly, the given sequence $\{w_{n}\}$ is bounded but not monotonically decreasing, therefore the given weights satisfy the conditions given in Theorem \ref{thm 3.5} but does not satisfy the conditions given in Theorem 2 of \cite{nikolskii1965invariant}.}
\end{ex}
 
We now give characterizations of infinite-dimensional subspaces of $\H$ that are invariant under $T^{*l}$ for $2\le l\le 3$ and infinite-dimensional subspaces that are jointly invariant under $T^{*2}$ and $T^{*3}.$ As 
noted earlier, we focus on two classes of weights considered in \cite{nikolskii1965invariant} and \cite{yadav1982characterization}. We start with a characterization of infinite-dimensional subspaces that are  invariant under $T^{*2}$ where the weights satisfy the conditions of \cite{yadav1982characterization}. We have already shown in Theorem \ref{thm 3.6} that the condition of bounded variation assumed in \cite{yadav1982characterization} is redundant, and so we do not impose this condition on weights in our work. 

\begin{theorem}\label{thm 3.9}
Let the weight sequence $\{w_{n}\}_{n \in \mathbb{N}}$ satisfy Condition (\ref{eq  3.1}). Suppose $\M$ is a proper infinite-dimensional subspace of $\H$ invariant under $T^{*2}$. Then, $\M$ has one of the following forms:
\begin{enumerate}[(i)]
\item $\M = \bigvee_{i=0}^{\infty}\{e_{2i+t}\}$ for $t=0, 1$.
\item $\M = M_{2n+1+t}\bigoplus\bigvee_{i=n+1+t}^{\infty} \{e_{2i+1-t}\}$ for some $n\geq 0$ and $t=0,1$.
\end{enumerate}
\end{theorem}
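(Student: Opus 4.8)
The plan is to exploit the fact that $T^{*2}$ leaves invariant the splitting $\H=\H_{e}\oplus\H_{o}$, where $\H_{e}=\bigvee_{i}\{e_{2i}\}$ and $\H_{o}=\bigvee_{i}\{e_{2i+1}\}$, acting on each summand as a backward weighted shift; equivalently, I would pass to the orthogonal complement $\mathcal{N}=\M^{\perp}$, which is invariant under $T^{2}=S_{e}\oplus S_{o}$, where $S_{e}$ and $S_{o}$ are the forward weighted shifts induced on $\H_{e}$ and $\H_{o}$ by the paired weights $w_{2i}w_{2i+1}$ and $w_{2i+1}w_{2i+2}$. First I would verify that these paired weights again satisfy Condition (3.1) (the relevant products are subproducts of the original ones), so that Theorem 3.6 applies to each half and both $S_{e}$ and $S_{o}$ are unicellular, with invariant-subspace lattices consisting of the tails $L^{e}_{k}$ and $L^{o}_{k}$ inside each half. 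In particular, the quadratic-closeness/Bari argument of Theorem 3.6 yields that a vector of $\H_{e}$ whose lowest even coordinate sits at index $2i_{0}$ generates cyclically under $S_{e}$ exactly the tail $L^{e}_{i_{0}}$, and similarly for $\H_{o}$.

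Granting this, the proper infinite-dimensional $T^{*2}$-invariant subspaces correspond under $\M\mapsto\mathcal{N}=\M^{\perp}$ to the nonzero, infinite-codimensional $T^{2}$-invariant subspaces $\mathcal{N}$. The main reduction I would aim for is that every such $\mathcal{N}$ lies in a single half, i.e.\ $\mathcal{N}\subseteq\H_{e}$ or $\mathcal{N}\subseteq\H_{o}$. Once this is secured the theorem follows quickly: a nonzero $T^{2}$-invariant subspace of $\H_{e}$ is, by unicellularity of $S_{e}$, either a tail $L^{e}_{k}$ or all of $\H_{e}$, and complementing $L^{e}_{k}$ or $\H_{e}$ inside $\H$ produces precisely the displayed forms, the finite head $M_{2n+1+t}$ arising as the complement within the low indices while above the branch point only one parity survives; interchanging the roles of $\H_{e}$ and $\H_{o}$ accounts for the parameter $t$.

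To carry out the reduction I would analyse a single generator. For $x=x_{e}+x_{o}\in\mathcal{N}$ the cyclic subspace $\overline{\operatorname{span}}\{T^{2k}x:k\ge0\}=\overline{\operatorname{span}}\{S_{e}^{k}x_{e}+S_{o}^{k}x_{o}\}$ lies in $L^{e}_{i_{0}}\oplus L^{o}_{j_{0}}$, where $2i_{0}$ and $2j_{0}+1$ are the lowest even and odd coordinates of $x$. Writing $S_{e}^{k}x_{e}=\alpha_{k}(e_{2i_{0}+2k}+\varepsilon^{e}_{k})$ and $S_{o}^{k}x_{o}=\beta_{k}(e_{2j_{0}+1+2k}+\varepsilon^{o}_{k})$ with $\sum_{k}\|\varepsilon^{e}_{k}\|^{2}<\infty$ and $\sum_{k}\|\varepsilon^{o}_{k}\|^{2}<\infty$ (the content of the quadratic-closeness estimate of Theorem 3.6), I would compare the leading scalars $\alpha_{k}$ and $\beta_{k}$: if one sequence dominates the other in the quadratically-close sense, the orbit is quadratically close to a single-parity diagonal and Bari's theorem forces the cyclic subspace to equal a full tail $L^{e}_{i_{0}}$ or $L^{o}_{j_{0}}$, so that $\mathcal{N}$ absorbs a whole half and cannot have both parity projections large.

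The step I expect to be the genuine obstacle is exactly this comparison of the even and odd leading scalars, that is, controlling the mixed cross-parity behaviour of the orbit $\{S_{e}^{k}x_{e}+S_{o}^{k}x_{o}\}$. A priori the two halves are coupled only through the single common power $T^{2k}$, and a naive count suggests that the orbit of a genuinely mixed vector could span a skew graph-type subspace of $L^{e}_{i_{0}}\oplus L^{o}_{j_{0}}$ rather than a coordinate subspace; ruling this out is where the precise arithmetic of the two admissible weight classes must enter decisively, through the ratios of the products $\alpha_{k}$ and $\beta_{k}$ (governed by monotonicity in Nikolskii's class and by the uniform bound of Condition (3.1) in the other class). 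I would therefore isolate this as a separate lemma asserting that for these weights a mixed generator still forces whole basis vectors $e_{j}$ into $\mathcal{N}$, after which the coordinate description, and hence the two stated forms, follows.
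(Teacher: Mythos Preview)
Your framework is genuinely different from the paper's: you dualize to $\mathcal{N}=\M^{\perp}$, use the parity splitting $T^{2}=S_{e}\oplus S_{o}$, verify that the paired weights again satisfy Condition~(\ref{eq 3.1}) so that each half is unicellular, and then aim to show $\mathcal{N}$ lives in a single half. The paper instead stays on the $\M$ side throughout and does a direct case analysis on the coordinate pattern of a single $x\in\M$ with infinitely many nonzero entries; for each pattern it extracts basis vectors $e_{j}$ one at a time by producing explicit sequences in $\M$ converging in norm to $e_{j}$, never invoking Bari's theorem in the proof of Theorem~\ref{thm 3.9}.

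The genuine gap is in your mixed-generator step. You write that if $\alpha_{k}$ dominates $\beta_{k}$ quadratically then ``Bari's theorem forces the cyclic subspace to equal a full tail $L^{e}_{i_{0}}$,'' so that ``$\mathcal{N}$ absorbs a whole half.'' This is not what Bari gives. The normalized orbit $T^{2k}x/\alpha_{k}$ is indeed quadratically close to $\{e_{2i_{0}+2k}\}$, but each orbit vector carries a nonzero odd component, so the closed linear span of the orbit cannot equal the purely even subspace $L^{e}_{i_{0}}$; it is a graph over $L^{e}_{i_{0}}$ inside $L^{e}_{i_{0}}\oplus L^{o}_{j_{0}}$, and in general contains no individual $e_{j}$. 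Bari's theorem, as used in Theorem~\ref{thm 3.6}, requires the comparison sequence to be an orthonormal basis of the space in which the perturbed vectors actually lie, and that hypothesis fails here. Thus the reduction ``mixed vector in $\mathcal{N}\Rightarrow$ finite codimension'' does not follow from your mechanism, and the lemma you propose to isolate is precisely the statement your tools do not prove.

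The paper's decision to work with $T^{*2}$ on $\M$ is exactly what sidesteps this obstacle: the backward shift concentrates mass toward low indices, so $T^{*2K}x$, suitably normalized, converges in norm to $e_{0}$ (then $e_{1}$, etc.) along a well-chosen subsequence. That concentration argument handles the fully mixed case (infinitely many nonzero coordinates of each parity) directly, yielding $\M=\H$, with no Riesz-basis perturbation needed. On the forward $T^{2}$ side you lose this concentration, which is why the mixed case becomes the obstacle you correctly identify but which your proposed tool does not resolve.
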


\begin{proof}
According to the given hypothesis $\M$ is infinite-dimensional subspace of $\H$, then the Baire Category theorem guarantees the existence of at least an $x\in \M$ such that $x = \sum_{i=0}^{\infty} x_{i}e_{i}$ where infinitely many $x_{i}$'s are non-zero. Now as we proceed, we will exhaust all possible combinations of elements in $\M$.

\vspace{.2cm}
\noindent\underline{\textbf{Case $1$.}} First we assume that there exists an $x\in \M$ such that $x = \sum_{i=0}^{\infty} x_{2i}e_{2i}$, where infinitely many $x_{2i}$'s are non-zero. Then, we claim that $\bigvee_{i=0}^{\infty}\{e_{2i}\}\subseteq \M$.\\

First we will show that $e_{0}\in \M$. Now for all $m\geq 1$, we have
$$
 T^{*2m}x = \sum_{i=m}^{\infty} x_{2i}w_{2i-1}\cdots {w_{2i-2m}}e_{2i-2m}.
$$

Suppose there exist a natural number $n$ such that $x_{2n}\neq 0$, then
\begin{align*}
    \bigg\|\frac{T^{*2n}x}{x_{2n}w_{2n-1}\cdots w_{0}}-e_{0}\bigg\|^{2} &= \bigg\|\sum_{i=n+1}^{\infty}\frac{x_{2i}w_{2i-1}\cdots w_{2i-2n}}{x_{2n}w_{2n-1}\cdots w_{0}}e_{2i-2n}\bigg\|^{2}\\
    &= \sum_{i=n+1}^{\infty}\bigg(\frac{w_{2i-1}\cdots w_{2i-2n}}{w_{2n-1}\cdots w_{0}}\bigg)^{2}\bigg|\frac{x_{2i}}{x_{2n}}\bigg|^{2}\\
   &= \frac{1}{w_{0}^{2}w_{1}^{2}}\sum_{i=n+1}^{\infty}\bigg(\frac{w_{2i-2n}\cdots w_{2i-1}}{w_{2}\cdots w_{2n}}\bigg)^{2}w_{2n}^{2}\bigg|\frac{x_{2i}}{x_{2n}}\bigg|^{2}\\
   &= \frac{w_{2n}^{2}}{w_{0}^{2}w_{1}^{2}}\sum_{i=0}^{\infty}\bigg(\frac{w_{2i+2}\cdots w_{2i+2n+1}}{w_{2}\cdots w_{2n}}\bigg)^{2}\bigg|\frac{x_{2i+2n+2}}{x_{2n}}\bigg|^{2}\\
   &= \frac{w_{2n}^{2}}{w_{0}^{2}w_{1}^{2}}\sum_{i=0}^{\infty}\bigg(\frac{w_{2i+2}\cdots w_{2i+2n}}{w_{2}\cdots w_{2n}}\bigg)^{2}w_{2i+2n+1}^{2}~\bigg|\frac{x_{2i+2n+2}}{x_{2n}}\bigg|^{2}\\
   & \leq \frac{\mu^{2}\delta}{w_{0}^{2}w_{1}^{2}}\sum_{i=0}^{\infty}w_{2i+2n+1}^{2}~\bigg|\frac{x_{2i+2n+2}}{x_{2n}}\bigg|^{2}.
\end{align*}

As we know $\{w_{n}\}_{n \in \mathbb{N}}\in \ell^{2}$, so for all $\epsilon > 0$ there exists an integer $J$ such that $\sum_{i= J}^{\infty}w_{2i}^{2}< \epsilon$. Also since $\{|x_{2i}|\}$ are bounded, choose $ K \geq J$ such that $x_{2K}= \underset{i\geq J}{\sup}~\{|x_{2i}|\}$. Thus
\begin{align*}
  \bigg\|\frac{T^{*2K}x}{x_{2K}w_{2K-1}\cdots w_{0}}-e_{0}\bigg\|^{2}  &\leq \frac{\mu^{2}\delta}{w_{0}^{2}w_{1}^{2}}\sum_{i=0}^{\infty}w_{2i+2K+1}^{2}~\bigg|\frac{x_{2i+2K+2}}{x_{2K}}\bigg|^{2}\\
  &\leq \frac{\mu^{2}\delta}{w_{0}^{2}w_{1}^{2}}\sum_{i=0}^{\infty}w_{2i+2K+1}^{2}
  < C\epsilon,
\end{align*}
here $C$ is a constant and this implies that $e_{0}\in \M$.

As for each $n\geq 0$, we get $y= T^{*2n}x- x_{2n}w_{2n-1}\cdots w_{0}e_{0}= \sum_{i=n+1}^{\infty}x_{2i}w_{2i-1}\cdots w_{2i-2n}e_{2i-2n}\in \M$, therefore whenever $x_{2n+2}\neq 0$, we get
\begin{align*}
   \bigg\|\frac{y}{x_{2n+2}w_{2n+1}\cdots w_{2}}-e_{2}\bigg\|^{2}  &= \bigg\|\sum_{i=n+2}^{\infty}\frac{x_{2i}w_{2i-1}\cdots w_{2i-2n}}{x_{2n+2}w_{2n+1}\cdots w_{2}}e_{2i-2n}\bigg\|^{2}\\
    &= \sum_{i=n+2}^{\infty}\bigg(\frac{w_{2i-1}\cdots w_{2i-2n}}{w_{2n+1}\cdots w_{2}}\bigg)^{2}\bigg|\frac{x_{2i}}{x_{2n+2}}\bigg|^{2}\\
   &= \frac{1}{w_{2}^{2}w_{3}^{2}}\sum_{i=n+2}^{\infty}\bigg(\frac{w_{2i-2n}\cdots w_{2i-1}}{w_{4}\cdots w_{2n+2}}\bigg)^{2}w_{2n+2}^{2}\bigg|\frac{x_{2i}}{x_{2n+2}}\bigg|^{2}\\
   &= \frac{w_{2n+2}^{2}}{w_{2}^{2}w_{3}^{2}}\sum_{i=0}^{\infty}\bigg(\frac{w_{2i+4}\cdots w_{2i+2n+3}}{w_{4}\cdots w_{2n+2}}\bigg)^{2}\bigg|\frac{x_{2i+2n+4}}{x_{2n+2}}\bigg|^{2}\\
   &= \frac{w_{2n+2}^{2}}{w_{2}^{2}w_{3}^{2}}\sum_{i=0}^{\infty}\bigg(\frac{w_{2i+4}\cdots w_{2i+2n+2}}{w_{2}\cdots w_{2n}}\bigg)^{2}w_{2i+2n+3}^{2}~\bigg|\frac{x_{2i+2n+4}}{x_{2n+2}}\bigg|^{2}.
\end{align*}

Then by the same procedure as done for $e_{0}$, we can show that $e_{2}\in \M$. Proceeding in this manner we can show that $e_{2i}\in \M$ for every $i\in \mathbb{N}$, and hence, $\bigvee_{i=0}^{\infty}\{e_{2i}\}\subseteq \M$.
Furthermore, if $\M\subseteq\bigvee_{i=0}^{\infty}\{e_{2i}\}$, then the above calculations implies that $\M= \bigvee_{i=0}^{\infty}\{e_{2i}\}$.

\vspace{.2cm}
\noindent\underline{\textbf{Case $2$.}} 
Suppose there exists an $x\in \M$ such that $x = \sum_{i=0}^{\infty} x_{2i+1}e_{2i+1}$, where infinitely many $x_{2i+1}$'s are non-zero. Then, using similar arguments as Case $1$, we can conclude that $\M= \bigvee_{i=0}^{\infty}\{e_{2i+1}\}$.
 
\vspace{.2cm}
\noindent\underline{\textbf{Case $3$.}} 
Suppose there exists an $x\in \M$ such that
\begin{equation}\label{eq  3.3}
     x 
    = \sum_{i=0}^{n} x_{2i+1}e_{2i+1}+\sum_{i=0}^{\infty} x_{2i}e_{2i},
\end{equation}
where infinitely many $x_{2i}$'s are non-zero, then we claim that $M_{2n+1}\bigoplus\bigvee_{i=n+2}^{\infty} \{e_{2i}\}\subseteq \M$.\\

We will first show that $e_{0}\in \M$. Now choose an positive integer $p>n+1$ such that $x_{2p}\neq 0$, then
$$
 \frac{T^{*2p}x}{x_{2p}w_{2p-1}\cdots w_{0}}-e_{0}= \sum_{i=p+1}^{\infty}\frac{x_{2i}w_{2i-1}\cdots w_{2i-2p}}{x_{2p}w_{2p-1}\cdots w_{0}}e_{2i-2p}.
$$

Again using the same procedure as Case $1$, we can show that $\bigvee_{i=0}^{\infty} \{e_{2i}\}\subseteq \M$.

From Equation (\ref{eq  3.3}), consider $y=x-\sum_{i=0}^{\infty}x_{2i}e_{2i}=\sum_{i=0}^{n}x_{2i+1}e_{2i+1}$. Since $\bigvee_{i=0}^{\infty} \{e_{2i}\}\subseteq \M$, then $y$ is an element in $\M$.
Now, $T^{*2n}y=x_{2n+1}w_{2n+1}\cdots w_{2}e_{1}$, and since $T^{*2n}y\in \M$, then $ e_{1}\in \M$. 

Similarly, $T^{*2(n-1)}y=x_{2n-1}w_{2n-1}\cdots w_{2}e_{1}~ +~ x_{2n+1}w_{2n+1}\cdots w_{4}e_{3}$, and since $\{T^{*2(n-1)}y,e_{1}\}\subseteq \M $, then $e_{3}\in \M$.

Proceeding in this manner, we can verify that $\bigvee_{i=0}^{n} \{e_{2i+1}\}\subseteq \M$. Now, if $\M\subseteq M_{2n+2}\bigoplus\bigvee_{i=n+2}^{\infty} \{e_{2i}\}$ for some $n\geq 0$, then $\M= M_{2n+1}\bigoplus\bigvee_{i=n+2}^{\infty} \{e_{2i}\}$.

\vspace{.2cm}
\noindent\underline{\textbf{Case $4$.}}
Suppose there exists an $x\in \M$ such that
\begin{equation*}
     x = \sum_{i=0}^{n} x_{2i}e_{2i}+\sum_{i=0}^{\infty} x_{2i+1}e_{2i+1},
\end{equation*}
where infinitely many $x_{2i+1}$'s are non-zero, then using the similar arguments as of Case $3$, one can conclude that $ \M= M_{2n}\oplus\bigvee_{i=n}^{\infty} \{e_{2i+1}\}$.

\vspace{.2cm}
\noindent\underline{\textbf{Case $5$.}} Suppose there exists an $x\in \M$ such that $x = \sum_{i=0}^{\infty} x_{i}e_{i}$, where infinitely many $x_{i}$'s are non-zero.
We will show that $\bigvee_{i=0}^{\infty}\{e_{i}\}\subseteq \M$.\\

First we will show that $e_{0}\in \M$. Now for all $m\geq 1$, we have
$$
 T^{*2m}x = \sum_{i=2m}^{\infty} x_{i}w_{i-1}\cdots {w_{i-2m}}e_{i-2m}.
$$

Suppose there exist a natual number $n$ such that $x_{2n}\neq 0$, then
\begin{align*}
    \bigg\|\frac{T^{*2n}x}{x_{2n}w_{2n-1}\cdots w_{0}}-e_{0}\bigg\|^{2} &= \bigg\|\sum_{i=2n+1}^{\infty}\frac{x_{i}w_{i-1}\cdots w_{i-2n}}{x_{2n}w_{2n-1}\cdots w_{0}}e_{i-2n}\bigg\|^{2}\\
    &= \sum_{i=2n+1}^{\infty}\bigg(\frac{w_{i-1}\cdots w_{i-2n}}{w_{2n-1}\cdots w_{0}}\bigg)^{2}\bigg|\frac{x_{i}}{x_{2n}}\bigg|^{2}\\
   &= \frac{1}{w_{0}^{2}w_{1}^{2}}\sum_{i=0}^{\infty}\bigg(\frac{w_{i+1}\cdots w_{i+2n}}{w_{2}\cdots w_{2n-1}}\bigg)^{2}\bigg|\frac{x_{i+2n+1}}{x_{2n}}\bigg|^{2}\\
   &= \frac{1}{w_{0}^{2}w_{1}^{2}}\sum_{i=0}^{\infty}\bigg(\frac{w_{i+2}\cdots w_{i+2n-1}}{w_{2}\cdots w_{2n-1}}\bigg)^{2}w_{i+2n}^{2}w_{i+1}^{2}~\bigg|\frac{x_{i+2n+1}}{x_{2n}}\bigg|^{2}\\
   & \leq \frac{\mu^{2}\delta}{w_{0}^{2}w_{1}^{2}}\sum_{i=0}^{\infty}w_{i+2n}^{2}~\bigg|\frac{x_{i+2n+1}}{x_{2n}}\bigg|^{2}.
\end{align*}
As we know $\{w_{n}\}_{n \in \mathbb{N}}\in \ell^{2}$, so for all $\epsilon > 0$ there exists an integer $J$ such that $\sum_{i=J}^{\infty}w_{i}^{2}< \epsilon$. Also since $\{|x_{i}|\}$ are bounded, choose $K \geq J$ such that $x_{2K}= \underset{i\geq 2{J}+1}{\sup}~\{|x_{i}|\}$. Thus
\begin{align*}
  \bigg\|\frac{T^{*2K}x}{x_{2K}w_{2K-1}\cdots w_{0}}-e_{0}\bigg\|^{2}  &\leq \frac{\mu^{2}\delta}{w_{0}^{2}w_{1}^{2}}\sum_{i=0}^{\infty}w_{i+2K}^{2}~\bigg|\frac{x_{i+2K+1}}{x_{2K}}\bigg|^{2}\\
  &\leq \frac{\mu^{2}\delta}{w_{0}^{2}w_{1}^{2}}\sum_{i=0}^{\infty}w_{i+2K}^{2}< C\epsilon,
\end{align*}
here $C$ is a constant and this implies that $e_{0}\in \M$.

As for each $n\geq 0,~ y=T^{*2n}x- x_{2n}w_{2n-1}\cdots w_{0}e_{0}= \sum_{i=2n+1}^{\infty}x_{i}w_{i-1}\cdots w_{i-2n}e_{i-2n}\in \M$, therefore whenever $x_{2n+1}\neq 0$ we have
\begin{align*}
   \bigg\|\frac{y}{x_{2n+1}w_{2n}\cdots w_{1}}-e_{1}\bigg\|^{2}  &= \bigg\|\sum_{i=2n+2}^{\infty}\frac{x_{i}w_{i-1}\cdots w_{i-2n}}{x_{2n+1}w_{2n}\cdots w_{1}}e_{i-2n}\bigg\|^{2}\\
    &= \sum_{i=2n+2}^{\infty}\bigg(\frac{w_{i-1}\cdots w_{i-2n}}{w_{2n}\cdots w_{1}}\bigg)^{2}\bigg|\frac{x_{i}}{x_{2n+1}}\bigg|^{2}\\
   &= \frac{1}{w_{1}^{2}}\sum_{i=2n+2}^{\infty}\bigg(\frac{w_{i-2n}\cdots w_{i-1}}{w_{2}\cdots w_{2n}}\bigg)^{2}\bigg|\frac{x_{i}}{x_{2n+1}}\bigg|^{2}\\
   &= \frac{1}{w_{1}^{2}}\sum_{i=0}^{\infty}\bigg(\frac{w_{i+2}\cdots w_{i+2n+1}}{w_{2}\cdots w_{2n}}\bigg)^{2}\bigg|\frac{x_{i+2n+2}}{x_{2n+1}}\bigg|^{2}\\
   &= \frac{1}{w_{1}^{2}}\sum_{i=0}^{\infty}\bigg(\frac{w_{i+2}\cdots w_{i+2n}}{w_{2}\cdots w_{2n}}\bigg)^{2}w_{i+2n+1}^{2}~\bigg|\frac{x_{i+2n+2}}{x_{2n+1}}\bigg|^{2}.
\end{align*}
Then applying the same procedure as done for $e_{0}$, it can be shown that $e_{1}\in \M$. Proceeding in this manner, we can show that $e_{i}\in \M$ for every $i\in \mathbb{N}$, and hence $ \bigvee_{i=0}^{\infty}\{e_{i}\}\subseteq \M$. Moreover, $\M\subseteq\H=\bigvee_{i=0}^{\infty}\{e_{i}\}$, therefore $ \M= \H$.

This completes the proof.
\end{proof}

Now we will work with second class of weight as considered in \cite{nikolskii1965invariant}. Let $\{w_{n}\}_{n\in \mathbb{N}}$ be a sequence of non-zero real numbers such that 
\begin{equation}\label{eq  3.4}
  w_{n+1}\leq w_{n} ~ \text{for all}~n\geq 0 \quad \text{and}\quad \{w_{n}\}_{n\in \mathbb{N}}\in \ell{^2}.         
\end{equation}
   
\begin{theorem}\label{thm 3.10}
Let the weight sequence $\{w_{n}\}_{n \in \mathbb{N}}$ satisfy Condition (\ref{eq  3.4}). Suppose $\M$ is a proper infinite-dimensional subspace of $\H$ invariant under $T^{*2}$. Then, $\M$ has one of the following forms:
\begin{enumerate}[(i).]
\item $\M = \bigvee_{i=0}^{\infty}\{e_{2i+t}\}$ for $t=0, 1$.
\item $\M = M_{2n+1+t}\oplus\bigvee_{i=n+1+t}^{\infty} \{e_{2i+1-t}\}$ for some $n\geq 0$ and $t=0, 1$.
\end{enumerate}
\end{theorem}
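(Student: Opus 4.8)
The plan is to re-run the five-case argument from the proof of Theorem \ref{thm 3.9} essentially verbatim, after observing that Condition (\ref{eq  3.4}) furnishes---directly and in sharper form---the two facts that drove that proof. In Theorem \ref{thm 3.9} the hypothesis (\ref{eq  3.1}) entered at a single point: because each term of a convergent non-negative series is dominated by its sum, (\ref{eq  3.1}) yields the uniform bound $\big(\frac{w_{k+m}\cdots w_{k+n}}{w_{m}\cdots w_{n}}\big)^{2}\le\delta$, and (taking $m=n$) it also forces $\{w_{n}\}\in\ell^{2}$. Under Condition (\ref{eq  3.4}) the square-summability is assumed outright, while monotonicity gives the stronger pointwise estimate
\[
\frac{w_{k+m}w_{k+m+1}\cdots w_{k+n}}{w_{m}w_{m+1}\cdots w_{n}}=\prod_{j=m}^{n}\frac{w_{k+j}}{w_{j}}\le 1,
\]
each factor being at most $1$. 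Hence every inequality in the proof of Theorem \ref{thm 3.9} survives with the constant $\delta$ replaced by $1$ (and with $\mu=\sup_{n}w_{n}=w_{0}$).

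Concretely, I would first invoke the Baire category theorem to produce $x\in\M$ with infinitely many non-zero coordinates, and then distinguish the same five cases as in Theorem \ref{thm 3.9}, according to whether the support of $x$ lies in the even indices, in the odd indices, consists of finitely many odd together with infinitely many even coordinates, the symmetric situation, or meets both parities infinitely often. In each case the mechanism is identical: apply an appropriate power $T^{*2m}$, normalise, and observe that the squared distance of the result from the target basis vector $e_{j}$ is a series whose general term is a ratio-product of weights (now bounded by $1$ via the displayed inequality) multiplied by a single small weight $w_{\ell}^{2}$ and by a quotient of two coordinates of $x$. Selecting, exactly as in Theorem \ref{thm 3.9}, an index $K$ at which $|x_{2K}|$ attains the supremum of the corresponding tail makes every coordinate quotient at most $1$, so that the residual sum is a tail $\sum_{\ell\ge 2K}w_{\ell}^{2}$ of the $\ell^{2}$ sequence and hence smaller than any prescribed $\epsilon$. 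This places the relevant $e_{j}$ in $\M$; iterating over $j$ builds up all the required basis vectors and forces $\M$ into one of the two listed forms, the one remaining configuration giving $\M=\H$, which is excluded since $\M$ is assumed proper.

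I do not expect a genuine conceptual obstacle, since the single place where Theorem \ref{thm 3.9} invoked (\ref{eq  3.1}) is supplied more cheaply by monotonicity. The only work is bookkeeping: one must check, in the three mixed cases, that the index shifts and the peeling-off of one weight factor align so that, after applying the bound $\le 1$, one is left precisely with a tail of $\sum_{\ell}w_{\ell}^{2}$---but this is the identical manipulation already carried out in Theorem \ref{thm 3.9}, so no new estimate is needed. Consequently the conclusion, and the list of admissible subspaces, coincides with that of Theorem \ref{thm 3.9}.
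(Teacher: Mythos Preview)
Your proposal is correct and matches the paper's approach: the paper likewise splits into the same five cases, and in each one replaces the use of the $\delta$-bound from Condition~(\ref{eq  3.1}) by the monotonicity estimate $\frac{w_{k+j}}{w_j}\le 1$, leaving a single factor $w_\ell^2$ whose tail sum is small by square-summability, with the coordinate quotients controlled by choosing $K$ at the supremum of the tail. The only cosmetic difference is that the paper writes out the simplified bound directly rather than framing it as ``Theorem~\ref{thm 3.9} with $\delta=1$,'' but the mechanism is identical.
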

\begin{proof}
As in the proof of Theorem \ref{thm 3.9}, here also we divide the proof in five cases based on the non-zero coefficients of elements of $\M$.\\

\noindent\underline{\textbf{Case $1$.}} First we assume that there exists an $x\in \M$ such that $x = \sum_{i=0}^{\infty} x_{2i}e_{2i}$, where infinitely many $x_{2i}$'s are non-zero, then we claim that $ \bigvee_{i=0}^{\infty}\{e_{2i}\}\subseteq \M$.\\

First we will show that $e_{0}\in \M$. Now for all $m\geq 1$, we have
$$
 T^{*2m}x = \sum_{i=m}^{\infty} x_{2i}w_{2i-1}\cdots {w_{2i-2m}}e_{2i-2m}.
$$

Suppose there exist a natural number $n$ such that $x_{2n}\neq 0$, then
$$
\bigg\|\frac{T^{*2n}x}{x_{2n}w_{2n-1}\cdots w_{0}}-e_{0}\bigg\|^{2} = \bigg\|\sum_{i=n+1}^{\infty}\frac{x_{2i}w_{2i-1}\cdots w_{2i-2n}}{x_{2n}w_{2n-1}\cdots w_{0}}e_{2i-2n}\bigg\|^{2}.
$$

Since $\{w_{n}\}_{n=0}^{\infty}$ is a monotonically decreasing sequence, then $\frac{w_{2i-j-1}}{w_{2n-j}}\leq 1,~i\geq n+1,~ 1\leq j\leq 2n-1$. Now, we have 
\begin{align*}
 \bigg\|\sum_{i=n+1}^{\infty}\frac{x_{2i}w_{2i-1}\cdots w_{2i-2n}}{x_{2n}w_{2n-1}\cdots w_{0}}e_{2i-2n}\bigg\|^{2} &=\sum_{i=n+1}^{\infty}\bigg|\frac{x_{2i}w_{2i-1}\cdots w_{2i-2n}}{x_{2n}w_{2n-1}\cdots w_{0}}\bigg|^{2}\\
    & \leq \sum_{i=n+1}^{\infty}\bigg|\frac{x_{2i}}{x_{2n}}\bigg|^{2}\bigg(\frac{w_{2i-1}}{w_{0}}\bigg)^{2}.
\end{align*}

As we know $\{w_{n}\}_{n \in \mathbb{N}}\in \ell^{2}$, so for all $\epsilon > 0$, there exists an positive integer $J$ such that $\sum_{i= J}^{\infty}\frac{w_{2i-1}^{2}}{w_{0}^{2}}< \epsilon$ and since $\{|x_{2i}|\}$ are bounded, choose an positive integer $K \geq J$ such that $x_{2K}= \underset{i\geq J}{\max}\{|x_{2i}|\}$.\\
Thus, we have
$$
  \bigg\|\frac{T^{*2K}x}{x_{2K}w_{2K-1}\cdots w_{0}}-e_{0}\bigg\|^{2}  \leq\sum_{i=K+1}^{\infty}\bigg|\frac{x_{2i}}{x_{2n}}\bigg|^{2}\bigg(\frac{w_{2i-1}}{w_{0}}\bigg)^{2} < \epsilon ~\text{as}~K\to \infty.
$$

Since the sequence $\{T^{*2K}x/x_{2K}w_{2K-1}\cdots w_{0}\}$ converges in $\M$ to $e_{0}$, this implies that $ e_{0}\in \M$.

As for each $n\geq 0,~ y= T^{*2n}x- x_{2n}w_{2n-1}\cdots w_{0}e_{0}= \sum_{i=n+1}^{\infty}x_{2i}w_{2i-1}\cdots w_{2i-2n}e_{2i-2n}\in \M$, therefore whenever $x_{2n+2}\neq 0$, we have
$$
  \bigg\|\frac{y}{x_{2n+2}w_{2n+1}\cdots w_{2}}-e_{2}\bigg\|^{2}  = \bigg\|\sum_{i=n+2}^{\infty}\frac{x_{2i}w_{2i-1}\cdots w_{2i-2n}}{x_{2n+2}w_{2n+1}\cdots w_{2}}e_{2i-2n}\bigg\|^{2}.
$$

Then by the same procedure as done for $e_{0}$, we can show that $e_{2}\in \M$. Proceeding in this manner we can show that $e_{2i}\in \M$ for every $i\in \mathbb{N}$, and hence $ \bigvee_{i=0}^{\infty}\{e_{2i}\}\subseteq \M$. Moreover, if $\M\subseteq\bigvee_{i=0}^{\infty}\{e_{2i}\}$ such that $T^{*2}\M\subseteq \M$, then $\M= \bigvee_{i=0}^{\infty}\{e_{2i}\}$.

The rest of the following cases can be dealt with using arguments similar to the ones used in Case 1. For completion, we note the cases and the various forms that $\M$ assume in their.

\vspace{.2cm}
\noindent\underline{\textbf{Case $2$.}} 
Suppose there exists an $x\in \M$ such that $x = \sum_{i=0}^{\infty} x_{2i+1}e_{2i+1}$, where infinitely many $x_{2i+1}$'s are non-zero. Then, $\M= \bigvee_{i=0}^{\infty}\{e_{2i+1}\}$.
 
\vspace{.2cm}
\noindent\underline{\textbf{Case $3$.}} 
Suppose there exists an $x\in \M$ such that $x = \sum_{i=0}^{n} x_{2i+1}e_{2i+1}+\sum_{i=0}^{\infty} x_{2i}e_{2i}$, where infinitely many $x_{2i}$'s are non-zero. Then,  $\M=M_{2n+1}\bigoplus\bigvee_{i=n+2}^{\infty} \{e_{2i}\}$.

\vspace{.2cm}
\noindent\underline{\textbf{Case $4$.}}
Suppose there exists an $x\in \M$ such that $x = \sum_{i=0}^{n} x_{2i}e_{2i}+\sum_{i=0}^{\infty} x_{2i+1}e_{2i+1}$, where infinitely many $x_{2i+1}$'s are non-zero. Then, $ \M= M_{2n}\oplus\bigvee_{i=n}^{\infty} \{e_{2i+1}\}$.

\vspace{.2cm}
\noindent\underline{\textbf{Case $5$.}} Suppose there exists an $x\in \M$ such that $x = \sum_{i=0}^{\infty} x_{i}e_{i}$, where infinitely many $x_{i}$'s are non-zero. Then, $\M=\H$.

This completes the proof.
\end{proof}

\begin{theorem}\label{thm 3.11}
Let the weights satisfy either Condition (\ref{eq  3.1}) or Condition (\ref{eq  3.4}) and $\M\subseteq \H$ be a non-zero proper infinite-dimensional subspace such that $T^{*3}\M\subseteq \M$. Then $\M$ can have one of the following forms:
\begin{enumerate}[(i)]
\item $\M = \bigvee_{i=0}^{\infty}\{e_{3i+t}\}$ for $t\in \{0,1,2\}$.
\item $\M = \bigvee_{i=0}^{n}\{e_{3i+r}\}\bigoplus \bigvee_{i=0}^{m} \{e_{3i+s}\}$ for $r,s\in \{0,1,2\}$, $r\neq s$ such that $n$ and $m$ cannot be both finite.
\item $\M = M_{3l+2}\bigoplus\bigvee_{i=l+1}^{n} \{e_{3i+r}\}\bigoplus \bigvee_{i=l+1}^{m} \{e_{3i+s}\}$ for $r,s\in \{0,1,2\}$, $r\neq s$, and $l\geq 0$ such that $n$ and $m$ cannot be both finite.
\end{enumerate}
\end{theorem}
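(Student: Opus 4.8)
The plan is to mirror the proofs of Theorems \ref{thm 3.9} and \ref{thm 3.10}, exploiting the fact that $T^{*3}$ preserves the three residue ladders $\bigvee_{i=0}^{\infty}\{e_{3i+t}\}$, $t\in\{0,1,2\}$, since $T^{*3}e_{j}=w_{j-1}w_{j-2}w_{j-3}\,e_{j-3}$ for $j\geq 3$ and $T^{*3}e_{j}=0$ for $j\leq 2$. The analytic engine is exactly the estimate already established in those two theorems: the computation bounding $\|T^{*2K}x/(x_{2K}w_{2K-1}\cdots w_{0})-e_{0}\|^{2}$ uses only $\mu=\sup_{n} w_{n}$, the constant $\delta$ of Condition (\ref{eq  3.1}), and $\{w_{n}\}\in\ell^{2}$ (respectively the monotonicity of Condition (\ref{eq  3.4})), none of which is sensitive to the step size. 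Replacing $2K$ by $3K$ throughout yields, verbatim, the same conclusion for $T^{*3}$ under either hypothesis. So I would treat the key extraction step as granted and concentrate on the combinatorics peculiar to three residues.

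First I would invoke the Baire Category theorem, as in the cited proofs, to produce $x=\sum_{k}x_{k}e_{k}\in\M$ with infinitely many nonzero $x_{k}$; by pigeonhole at least one residue class $t$ is \emph{tall}, meaning $x_{3i+t}\neq 0$ for infinitely many $i$. The extraction engine then runs by removing basis vectors in increasing order of index. Assuming $e_{0},\dots,e_{j-1}\in\M$ have already been produced, I subtract their now-available components from a suitable $T^{*3K}x$ and normalize by the coefficient of $e_{j}$, which comes from level $3K+j$ and hence from residue $j\bmod 3$; provided that residue is tall, the residual lives in strictly higher basis vectors, and the transferred estimate forces the normalized sequence to converge, in the closed subspace $\M$, to $e_{j}$, whence $e_{j}\in\M$. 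Because one always peels off the current lowest surviving coordinate, the residual never acquires lower-index terms, which is precisely what makes the estimate applicable at each stage.

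The proof is then organized by the set $S\subseteq\{0,1,2\}$ of tall residues. If $|S|=3$ the engine produces every $e_{j}$, giving $\M=\H$, which is excluded since $\M$ is proper. For $|S|\in\{1,2\}$ I would first use tallness to capture the full ladder $\bigvee_{i=0}^{\infty}\{e_{3i+t}\}$ for each $t\in S$; subtracting these ladders from an arbitrary element of $\M$ then reduces the question to the non-tall residues, each of which contributes only finitely many basis vectors and hence has a largest level up to which it appears. Reading off these top levels, together with the lowest common level below which all three residues are present, reconstructs one of the three stated forms: form (i) when the two non-tall residues are absent, form (ii) when exactly one residue is absent (so there is no solid bottom block), and form (iii) when all three residues appear at the bottom, so that the lowest block is a full $M_{3l+2}$ with two residues $r,s$ continuing above level $l$. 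The hypothesis that $n$ and $m$ cannot be both finite simply records that $S\neq\emptyset$, i.e. that at least one continuing ladder is infinite.

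The main obstacle is not analytic but organizational: with three interacting residues I must verify that these forms are exhaustive and, crucially, establish the reverse inclusions so that $\M$ equals, and does not merely contain, the displayed subspace, pinning down the exact truncation indices $n,m$ and the cut level $l$. The delicate point is separating the case where a non-tall residue is genuinely absent (forms (i), (ii)) from the case where it persists up to level $l$ and thereby completes the solid block $M_{3l+2}$ (form (iii)); handling the various alignments over $r,s\in\{0,1,2\}$ and verifying, using the closedness of $\M$ together with the strict increase of indices in the residuals, that no mixed vector can smuggle an extra high-level coordinate past the already-extracted ladders is where the argument demands the most care.
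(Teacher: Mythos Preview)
Your proposal is correct and takes essentially the same approach as the paper: the paper's own proof is a single sentence deferring to the techniques of Theorems \ref{thm 3.9} and \ref{thm 3.10}, and your plan to replace the step size $2$ by $3$, organize by the set of ``tall'' residue classes, extract the corresponding infinite ladders via the identical analytic estimate, and then peel off the remaining finitely many basis vectors in the short residues is exactly that adaptation spelled out. If anything, your outline is more detailed than the paper's proof; the case bookkeeping you flag (matching the residue patterns to forms (i)--(iii) and pinning down the reverse inclusions) is indeed the only place requiring care, and it parallels Cases~1--5 of Theorem~\ref{thm 3.9} with one additional residue.
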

 \begin{proof}
 If the weights satisfy conditions (\ref{eq  3.1}) or (\ref{eq  3.4}), then we obtain the desired forms for $\M$ by using the similar techniques as we used in the proof of Theorem \ref{thm 3.9} and Theorem \ref{thm 3.10} respectively. 
 \end{proof}

Our final result of this section is a characterization of infinite-dimensional subspaces of $\H$ that are jointly invariant under $T^{*2}$ and $T^{*3}.$ 

\begin{theorem}\label{thm 3.12}
Let the weights satisfy either Condition (\ref{eq  3.1}) or Condition (\ref{eq  3.4}) and $\M\subseteq \H$ be an infinite-dimensional joint-invariant subspace of $T^{*2}$ and $T^{*3}$. Then $\M=\H$. 
\end{theorem}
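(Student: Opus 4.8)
The plan is to obtain Theorem \ref{thm 3.12} by intersecting the two characterizations already in hand. Since $\M$ is invariant under both $T^{*2}$ and $T^{*3}$, then \emph{if} $\M\neq\H$ it is a proper infinite-dimensional subspace that is invariant under $T^{*2}$, hence by Theorem \ref{thm 3.9} (under Condition (\ref{eq  3.1})) or Theorem \ref{thm 3.10} (under Condition (\ref{eq  3.4})) it must be one of the listed $T^{*2}$-forms; simultaneously, being invariant under $T^{*3}$, it must by Theorem \ref{thm 3.11} be one of the listed $T^{*3}$-forms. So I would show that no proper subspace can lie in both lists, which forces $\M=\H$.

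The structural observation that makes the comparison possible is that every subspace appearing in Theorems \ref{thm 3.9}, \ref{thm 3.10} and \ref{thm 3.11} is the closed linear span of a subset of the orthonormal basis, i.e. of the form $\overline{\text{span}}\{e_k : k\in A\}$ for some index set $A\subseteq\N$ (the finite pieces $M_{k}$ are themselves spans of initial segments of the basis). Because $\{e_k\}_{k\in\N}$ is orthonormal, such a subspace recovers its index set via $A=\{k : e_k\in\M\}$, so two basis-spanned subspaces coincide if and only if their index sets coincide. The whole problem thus reduces to deciding whether a single index set can describe both a $T^{*2}$-form and a $T^{*3}$-form.

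I would separate the two families by their natural density $d(A)=\lim_{N\to\infty}|A\cap\{0,\ldots,N\}|/(N+1)$; this limit exists for every form because each index set is eventually periodic, being an initial segment together with a finite union of arithmetic progressions of common difference $2$ or $3$. For the $T^{*2}$-forms (i) and (ii) of Theorems \ref{thm 3.9}--\ref{thm 3.10} the index set eventually consists of a single parity class, so $d(A)=\tfrac12$. For the $T^{*3}$-forms of Theorem \ref{thm 3.11} the index set eventually consists of one residue class modulo $3$ (forms (i), and forms (ii),(iii) with a single infinite progression) or of two residue classes modulo $3$ (forms (ii),(iii) with both progressions infinite), the finite initial segments contributing nothing; hence $d(A)\in\{\tfrac13,\tfrac23\}$. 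Since $\tfrac12\notin\{\tfrac13,\tfrac23\}$, no index set can present $\M$ simultaneously as a $T^{*2}$-form and as a $T^{*3}$-form. Therefore $\M$ cannot be a proper infinite-dimensional jointly invariant subspace, and as $\M$ is infinite-dimensional by hypothesis we conclude $\M=\H$.

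The step needing the most care is the bookkeeping in Theorem \ref{thm 3.11}: the clause ``$n$ and $m$ cannot be both finite'' guarantees at least one infinite progression and must be tracked to pin the density down to exactly $\{\tfrac13,\tfrac23\}$, after which separation from $\tfrac12$ is immediate. A more self-contained alternative, which I would mention but not adopt as the main proof, is to observe that $2$ and $3$ generate the numerical semigroup $\{0\}\cup\{n\in\N : n\geq 2\}$, so joint invariance yields $T^{*n}\M\subseteq\M$ for every $n\geq 2$; one could then run the Baire-category together with the quadratic-closeness argument of Theorem \ref{thm 3.9} (Case $5$) to produce every $e_k$ in $\M$ directly. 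I would keep the intersection-of-characterizations argument as the primary route since it reuses the theorems already established.
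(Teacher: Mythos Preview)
Your argument is correct but takes a genuinely different route from the paper. The paper adopts exactly what you sketch as your ``self-contained alternative'': it observes that invariance under $T^{*2}$ and $T^{*3}$ gives $T^{*j}\M\subseteq\M$ for all $j\ge 2$, picks (via Baire) an $x\in\M$ with infinitely many nonzero coefficients, and then invokes the argument of \cite{nikolskii1965invariant} or \cite{yadav1982characterization} (according to which condition on the weights holds) to conclude $\bigvee_{i\ge 0}\{e_i\}\subseteq\M$, hence $\M=\H$. Your primary route instead intersects the classification lists of Theorems~\ref{thm 3.9}/\ref{thm 3.10} and Theorem~\ref{thm 3.11}, notes that every listed subspace is basis-spanned, and separates the two families by natural density ($\tfrac12$ versus $\{\tfrac13,\tfrac23\}$). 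This is a clean invariant that makes the incompatibility transparent and avoids reopening any analytic estimates; the cost is that it leans on the full strength of three earlier theorems, whereas the paper's approach is essentially a two-line reduction to the already-known unicellularity results and would work even without those classifications in hand. Both proofs are short; yours is more structural, the paper's more direct.
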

\begin{proof}
Since $\M$ is invariant under both $T^{*2}$ and $T^{*3}$, therefore it is easy to see that $T^{*j}\M\subseteq \M$ for all $j\geq 2$. As $\M$ is infinite-dimensional, there exist an $x\in \M$ such that $x = \sum_{i=0}^{\infty} x_{i}e_{i}$, where infinitely many $x_{i}$'s are non-zero. Now, depending on the conditions on the weights (\ref{eq  3.1}) or (\ref{eq  3.4}), we can use the same proof of \cite{nikolskii1965invariant} or \cite{yadav1982characterization} respectively to show that $\bigvee_{i=0}^{\infty}\{e_{i}\}\subseteq \M$. Hence, $\M=\H$. This completes the proof.
\end{proof}
   

\section{Final remarks and results on Unicellular operators}\label{sec 4}
\begin{remark}
 For $i\geq 2$, the technique we used for characterizing an n-dimensional non-cyclic $T^{*i}$-invariant subspace involves partitioning $n$ into $k~(2\leq k\leq i)$ number of $T^{*i}$-invariant cyclic subspaces. Even for $i=3$, this technique resulted in many tedious calculations in the proof of Theorem \ref{ thm 2.8}. Now if $i$ gets larger, then the choices of partitioning $n$ into $k~(2\leq k\leq i)$ numbers also increases, due to which it does not seem feasible to apply the same technique for $i\geq 4$. It will be interesting to find new techniques to tackle the problem when the number $i$ gets larger.
\end{remark}
\begin{remark}
 As demonstrated in Sections \ref{sec 2} and \ref{sec 3}, weights have no bearing on our characterization of finite-dimensional invariant subspaces of the operators $T^{*2}$ and $T^{*3}$, but they are crucial for the infinite-dimensional cases. Interestingly, the lattice structure of the invariant subspaces of $T^{*2}$ and $T^{*3}$ is the same (Theorem \ref{thm 3.9}, \ref{thm 3.10} and \ref{thm 3.11}) for two independent classes of weights that we have considered. There are numerous more sufficient conditions on the weights $\{w_{n}\}_{n\in \mathbb{N}}$ for the weighted shift to be unicellular, see  \cite{harrison1971unicellularity, herrero1990unicellular, kang1992unicellularity, kang1994study, joo1995unicellularity, radjavi2003invariant, shields1974weighted, yadav1982invariant, yousefi2001unicellularity}. So the first critical question is whether the lattice structure of the invariant subspaces of $T^{*2}$ and $T^{*3}$ remains the same for these other weights also? And, if the answer to this question is affirmative, then developing a single strategy that works for all these weight classes will be an extremely interesting problem to work on.
\end{remark}

 For the forward weighted shift $T$, it is easy to deduce that operators $T^{2}$ or $T^{3}$ are not unicellular. In \cite{kang1994study} and \cite{joo1995unicellularity}, it is proved that for some unicellular forward weighted shifts $T$ and polynomials $p$, the operators $p(T)$ are also unicellular. In \cite{kang1994study}, the authors proved that if $\{w_{n}\}$ is monotonically decreasing and converges to $0$ such that $\sum_{n=0}^{\infty}n^{2}w_{n}^{2}<\infty$, then $T$ is unicellular and the operators $T(I+T)^{m-1}, ~\sum_{i=1}^{n}T^{i},~\text{and}~\sum_{i=1}^{n}i^{m}T^{i}$ for $m,n\in \mathbb{N}$ are also unicellular. Working on the similar kind of problems, the authors in \cite{joo1995unicellularity} proved that if $w_{n}=r^{n}$ for some $0<r<1$, then the operator $T+T^{2}$ remains unicellular. This motivated us to ask if $T$ is a given unicellular operator, then for which polynomials $p$ and analytic functions $f$ the operators $p(T)$ and $f(T)$ are also unicellular?

We have a partial answer to this problem. To give it, we need the following well-established result from \cite{radjavi2003invariant}.
\begin{theorem}[\cite{radjavi2003invariant}, Theorem 2.14]\label{thm 4.3}
If $f$ is analytic and one-to-one on an open set containing $\eta(\sigma(A))$, then \textit{Lat} $A$= \textit{Lat} $f(A)$.
\end{theorem}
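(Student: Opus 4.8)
The plan is to establish the two inclusions $\mathrm{Lat}\,A\subseteq\mathrm{Lat}\,f(A)$ and $\mathrm{Lat}\,f(A)\subseteq\mathrm{Lat}\,A$ separately, and to obtain the second by applying the first to the pair $(f(A),f^{-1})$. For the inclusion $\mathrm{Lat}\,A\subseteq\mathrm{Lat}\,f(A)$ I would combine the Riesz--Dunford functional calculus with Runge's theorem. Writing $K=\eta(\sigma(A))$, the set $K$ is by definition the complement of the unbounded component of $\rho(A)$, so $\C\setminus K$ is connected; hence by Runge's theorem $f$ is a uniform limit on a suitable contour around $\sigma(A)$ of polynomials $p_{n}$, and norm-continuity of the calculus gives $f(A)=\lim_{n}p_{n}(A)$ in operator norm. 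If $\M\in\mathrm{Lat}\,A$ then $p_{n}(A)\M\subseteq\M$ for every $n$, and since $\M$ is closed the norm limit satisfies $f(A)\M\subseteq\M$. Note that this direction uses only the analyticity of $f$ on a neighbourhood of $K$, not its injectivity.

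For the reverse inclusion I would use univalence. Being injective and holomorphic on the open set $U\supseteq K$, the map $f$ is a biholomorphism of $U$ onto $V:=f(U)$, with holomorphic inverse $g:=f^{-1}$ on $V$. The spectral mapping theorem gives $\sigma(f(A))=f(\sigma(A))\subseteq V$, and the composition rule for the holomorphic calculus yields $g(f(A))=(g\circ f)(A)=A$. Thus, setting $B:=f(A)$, it suffices to show that every $\M\in\mathrm{Lat}\,B$ is invariant under $g(B)=A$; that is, to run the argument of the previous paragraph for the pair $(B,g)$, which would give $\mathrm{Lat}\,B\subseteq\mathrm{Lat}\,g(B)=\mathrm{Lat}\,A$. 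To do so I need $g$ to be analytic on a neighbourhood of the full spectrum $\eta(\sigma(B))$; since $g$ is only defined on $V$, this requires $\eta(\sigma(B))\subseteq V$.

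The crux, and the step I expect to be the main obstacle, is therefore to prove that $\eta(\sigma(B))=\eta(f(\sigma(A)))\subseteq f(K)$; equivalently, that the compact set $f(K)$ again has connected complement. Indeed, once $\C\setminus f(K)$ is known to be connected it is an unbounded connected set disjoint from $\sigma(B)=f(\sigma(A))$, hence contained in the unbounded component of $\C\setminus\sigma(B)$, which forces $\eta(\sigma(B))\subseteq f(K)\subseteq V$, as needed. This is exactly where univalence is indispensable, and I would prove it by a winding-number argument. Choose a cycle $\Gamma$ in $U\setminus K$ that is null-homologous in $U$ and satisfies $n(\Gamma,z)=1$ for every $z\in K$, and set $\Omega_{\Gamma}=\{z: n(\Gamma,z)\neq0\}$, a relatively compact neighbourhood of $K$ with $\overline{\Omega_{\Gamma}}\subseteq U$. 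For $w_{0}\notin f(K)$ the argument principle together with the injectivity of $f$ gives $n(f\circ\Gamma,w_{0})=n(\Gamma,g(w_{0}))$ whenever $w_{0}\in V$, while for $w_{0}\notin V$ the integrand $f'(z)/(f(z)-w_{0})$ is holomorphic throughout $U$ and Cauchy's theorem (using that $\Gamma$ is null-homologous in $U$) gives $n(f\circ\Gamma,w_{0})=0$. Consequently $\{w: n(f\circ\Gamma,w)\neq0\}=f(\Omega_{\Gamma})$, a set containing $f(K)$.

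Tracking these winding numbers across the complementary components of $f(K)$ finishes the argument: any bounded component $W$ of $\C\setminus f(K)$ is encircled by $f\circ\Gamma$ (as $\Gamma$ encircles $K$), so $n(f\circ\Gamma,\cdot)=1$ on $W$ and hence $W\subseteq f(\Omega_{\Gamma})\subseteq V$. Then $g(W)$ is a bounded open set with $\overline{g(W)}\subseteq\overline{\Omega_{\Gamma}}\subseteq U$, disjoint from $g(f(K))=K$, and with boundary contained in $g(f(K))=K$; thus $g(W)$ would be a bounded component of $\C\setminus K$, contradicting the polynomial convexity of $K$. Hence $\C\setminus f(K)$ is connected, and the reverse inclusion follows as explained, giving $\mathrm{Lat}\,A=\mathrm{Lat}\,f(A)$. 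The routine ingredients---well-definedness and norm-continuity of the Riesz--Dunford calculus, the spectral mapping and composition theorems, and the existence of the homology-trivial cycle $\Gamma$---I would take as standard; the only genuinely delicate point, and the one carrying the whole proof, is this planar-topology fact that a univalent image of a polynomially convex set is again polynomially convex, which is what controls the full spectrum of $f(A)$.
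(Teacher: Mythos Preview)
The paper does not prove this statement: Theorem~\ref{thm 4.3} is quoted verbatim from Radjavi--Rosenthal \cite{radjavi2003invariant} as a known result, with no argument given, and is then invoked as a black box in the proof of Corollary~\ref{cor 4.4}. There is therefore no ``paper's own proof'' to compare against.

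Your outline is the standard route to this theorem (and essentially the one in \cite{radjavi2003invariant}): obtain $\mathrm{Lat}\,A\subseteq\mathrm{Lat}\,f(A)$ by approximating $f(A)$ in norm by polynomials in $A$ via Runge (using that $\C\setminus\eta(\sigma(A))$ is connected), and deduce the reverse inclusion by applying the same step to $g=f^{-1}$ once one knows $\eta(\sigma(f(A)))\subseteq f(U)$. You have correctly isolated the only nontrivial ingredient, namely that a univalent holomorphic image of a compact set with connected complement again has connected complement. One small point: in your winding-number argument the assertion ``$n(f\circ\Gamma,\cdot)=1$ on $W$'' is stated a bit quickly, since the trace of $f\circ\Gamma$ may pass through $W$ and the winding number is only locally constant off that trace. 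The clean way to finish is to note, from your own identity $\{w:n(f\circ\Gamma,w)\neq0\}=f(\Omega_{\Gamma})$, that the open set $\{w:n(f\circ\Gamma,w)=0\}$ is exactly $\C\setminus\overline{f(\Omega_{\Gamma})}$, which is unbounded and disjoint from $f(K)$; hence it lies in the unbounded component of $\C\setminus f(K)$, forcing every bounded component $W$ of $\C\setminus f(K)$ to satisfy $W\subseteq\overline{f(\Omega_{\Gamma})}\subseteq V$. From there your contradiction via $g(W)$ goes through. With that adjustment the argument is complete.
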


We use it to obtain the following interesting result:
\begin{cor}\label{cor 4.4}
Let $A$ be a quasinilpotent unicellular operator. Let $f$ be an analytic function on an open set containing the origin such that $f'(0)\neq 0$. Then $f(A)$ is also unicellular.
\end{cor}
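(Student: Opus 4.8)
The plan is to reduce the statement directly to Theorem \ref{thm 4.3}, which asserts $\text{Lat}\,A=\text{Lat}\,f(A)$ whenever $f$ is analytic and one-to-one on an open set containing $\eta(\sigma(A))$. Since the hypotheses give analyticity near the origin together with $f'(0)\neq 0$, the only two things I would need to check are that $\eta(\sigma(A))=\{0\}$ and that $f$ is one-to-one on some open set containing this full spectrum.

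For the first point I would use quasinilpotence. Since $\sigma(A)=\{0\}$, the resolvent set is $\rho(A)=\mathbb{C}\setminus\{0\}$, which is connected and unbounded and therefore has no bounded components. Hence $\eta(\sigma(A))=\sigma(A)\cup\emptyset=\{0\}$. This mirrors the spectral observation already recorded at the start of Section \ref{sec 2} for $T^{*l}$.

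For the second point I would invoke the holomorphic inverse function theorem: because $f'(0)\neq 0$ and $f$ is analytic in a neighborhood of $0$, the map $f$ is biholomorphic, and in particular injective, on some open neighborhood $U$ of $0$. Since $\eta(\sigma(A))=\{0\}\subseteq U$, the function $f$ is analytic and one-to-one on an open set containing $\eta(\sigma(A))$, so Theorem \ref{thm 4.3} applies and gives $\text{Lat}\,A=\text{Lat}\,f(A)$. Because $A$ is unicellular, $\text{Lat}\,A$ is totally ordered by inclusion, and this equality transports the total ordering to $\text{Lat}\,f(A)$, whence $f(A)$ is unicellular.

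I do not anticipate a genuine obstacle here: the essential content is simply that quasinilpotence collapses the full spectrum to the single point $0$, so that the \emph{local} injectivity of $f$ near $0$ supplied by $f'(0)\neq 0$ is already exactly what Theorem \ref{thm 4.3} requires — one never needs global injectivity of $f$. The only step deserving an explicit line of justification is the claim that $\mathbb{C}\setminus\{0\}$ has no bounded components, which is immediate since it is connected, and this is precisely what lets us drop any hypothesis on $f$ away from the origin.
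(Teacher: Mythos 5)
Your argument is correct and is essentially identical to the paper's own proof: both establish $\eta(\sigma(A))=\{0\}$ from quasinilpotence (since $\rho(A)=\mathbb{C}\setminus\{0\}$ has no bounded components), use $f'(0)\neq 0$ to get local injectivity of $f$ near the origin, and then apply Theorem \ref{thm 4.3} to conclude $\text{Lat}\,A=\text{Lat}\,f(A)$. No gaps; nothing further is needed.
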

\begin{proof}
It is given that $\sigma(A)=\{0\}$, then the resolvent set $\rho(A)=\mathbb{C} \setminus \{0\}$. Since the resolvent of the operator $A$ contains no bounded components, then $\eta(\sigma(A))=\{0\}$.

In complex analysis, it is a routine exercise to show that if the derivative of an analytic function is non-zero at a point, say $z_{0}$, then there exists a neighbourhood around $z_{0}$ where the function is one to one. According to the given hypothesis, let $f$ be a analytic function on an open set containing the origin and $f'(0)\neq 0$, then there exist a set around the origin, say $U$, such that $f|U$ is one to one. Moreover, $\eta(\sigma(A))\subset U$, then using Theorem \ref{thm 4.3}, we have Lat $A=$ Lat $f(A)$. Hence $f(A)$ is also unicellular.
\end{proof}

Note that for $m,n\in \mathbb{N}$, the functions $f(z)=z(1+z)^{m-1}, ~g(z)=\sum_{i=1}^{n}z^{i},~\text{and}~h(z)=\sum_{i=1}^{n}i^{m}z^{i}$ all satisfy the hypotheses of Corollary \ref{cor 4.4}. Also, the forward weighted shifts considered in \cite{kang1994study} and \cite{joo1995unicellularity} are unicellular and therefore are quasinilpotent. Hence, the results from \cite{kang1994study} and \cite{joo1995unicellularity} follow from our Corollary \ref{cor 4.4}. As a result, Corollary \ref{cor 4.4} not only extends the results of \cite{kang1994study} and \cite{joo1995unicellularity} to the entire class of unicellular forward weighted shifts, but it also provides a simpler proof of their results.

Finally, we note that the conditions of Corollary \ref{cor 4.4} are not necessary for $p(T)$ to be unicellular. For example, if $V$ is a Volterra operator
$$
V:L_{q}[0,1]\rightarrow{L_{q}[0,1]},~q\in [1,\infty)\quad\text{such that}\quad (Vf)(x)=\int_{0}^{x}f(x)dx.
$$
Then $V$ is unicellular with $\sigma(V)= \{0\}$. Moreover, $V^{n}$ is also unicellular for all $n\in \mathbb{N}$ (see \cite{gohberg1967theory}) but $p(z)=z^{n}$ does not satisfy the  conditions of Corollary \ref{cor 4.4}.

\subsection*{Acknowledgements} The first and third authors
thank the Mathematical Sciences Foundation,
Delhi for support and facilities needed to complete
the present work. The research of first author is supported by the Mathematical Research Impact Centric Support (MATRICS) grant, File No: MTR/2017/000749, by the Science and Engineering Research Board (SERB), Department of Science \& Technology (DST), Government of India. 

\bibliographystyle{plain}
\bibliography{bibliography.bib}
\end{document}